\documentclass[11pt]{amsart}
\usepackage[utf8x]{inputenc}
\usepackage{cmap}
\usepackage{geometry} 
\usepackage{graphicx}
\usepackage{epstopdf}
\usepackage{hyperref}
\graphicspath{{images/}}

\newcommand{\ceil}[1]{\left\lceil #1 \right\rceil}

\newtheorem{theorem}{Theorem}[section]

\newtheorem{proposition}{Proposition}[section]
\newtheorem{conjecture}{Conjecture}[section]
\newtheorem{corollary}{Corollary}[section]

\begin{document}
\title[On correlation of hyperbolic volumes]{On correlation of hyperbolic volumes of fullerenes with their properties}
\author{A.~Egorov, A.~Vesnin}
\address{Novosibirsk State University, Russia} 
\email{a.egorov2@g.nsu.ru} 
\address{Tomsk State University and Sobolev Institute of Mathematics, Russia} 
\email{vesnin@math.nsc.ru}

\thanks{The work was supported in part by the Theoretical Physics and Mathematics Advancement Foundation ``BASIS''} 
\keywords{ fullerene, hyperbolic geometry, volume, graph, Wiener index}
\subjclass[2010]{92E10, 51M10, 52B10}

\begin{abstract} 
We observe that fullerene graphs are one-skeletons of polyhedra, which can be realized with all dihedral angles equal to $\pi/2$ in a hyperbolic 3-dimensional space. One of the most important invariants of such a polyhedron is its volume. We are referring this volume as a hyperbolic volume of a fullerene. It is known that some topological indices of graphs of chemical compounds serve as strong descriptors and correlate with chemical properties. We demonstrate that  hyperbolic volume of fullerenes correlates with few important topological indices and so, hyperbolic volume  can serve as a chemical descriptor too. The correlation between hyperbolic volume of fullerene and its Wiener index suggested few conjectures on  volumes of hyperbolic polyhedra. These conjectures  are confirmed for the initial list of fullerenes. 
\end{abstract}
		
\maketitle 

\section{Introduction} \label{sec1} 

Mathematical methods became a very strong research tool in biology and chemistry in the last decades. We refer to~\cite{Xia} for a modern survey on geometric, topological, and graph theoretical methods for the modeling and analysis of biomolecular data and to~\cite{Lo} for the classification of common chemical descriptors for QSAR/QSPR analysis used for machine learning in chemical informatics and drug discovery. In this paper, we will consider 2D-Chemical Descriptors based on Structural topology, such as the Wiener index. We will discuss the properties of fullerenes. Fullerenes are one of the most famous classes of chemical structures investigated by geometric and topological methods in recent years, see, for example,~\cite{CataldoBook, AshrafiBook}. 

We denote by $C_{N}$ a fullerene with $N$ carbon atoms. It is known that $C_{N}$ has many isomers. We will be interested in ordering of fullerenes as well as in ordering of fullerenes with a fixed number of carbons. One of the approaches for ordering is based on graph-theoretical properties of fullerenes. Namely, some topological indices can be used, say the Wiener index, the pentagon signature, etc. Another approach, which we present in this paper, is based on non-Euclidean geometry invariants. We will show that any fullerene can be realized in a hyperbolic 3-dimensional space as a polyhedron with all dihedral angles equals $\pi/2$, see~\cite{Ratcliffe} for more information about hyperbolic geometry. It is known that such a realization is unique up to isometry. Therefore, hyperbolic volumes of fullerenes can be taken as a good parameter for the ordering of them.  

The paper is organized as follows. 
We begin in Section~\ref{sec2} by reminding some basic properties of fullerenes and mathematical propertied of fullerene graphs. In Section~\ref{sec3} we discuss a hyperbolic three-dimensional space and some facts about existence and volumes of polyhedra in hyperbolic space. We present fullerenes as bounded right-angled (with all dihedral and planar angles equal $\pi/2$) hyperbolic polyhedra and demonstrate new upper bounds for the volume of hyperbolic fullerenes, see Theorems~\ref{theorem34} and \ref{theorem35}. In Section~\ref{sec4} we discuss some topological indices of fullerene graphs, as Wiener index, pentagon adjacency index and others. In Section~\ref{sec5} we obtain the correlation of hyperbolic volume with topological indices and, so, with chemical properties of fullerenes, see tables therein. It demonstrates that hyperbolic volume can serve as a fullerene stability descriptor. In the last Section~\ref{sec6}  we formulate few conjectures on volumes of hyperbolic polyhedra of fullerene shapes. These conjectures are based on the correlation between hyperbolic volumes and  the Wiener index and are confirmed for the initial list of fullerenes. 

\section{Fullerenes} \label{sec2} 

Fullerene is a spherically shaped molecule built entirely from carbon atoms. The carbon atoms form rings, each of which is either a pentagon or a hexagon. Each atom has bonded with exactly three other atoms. The first fullerene, known as a "Buckminsterfullerene", was discovered in 1985, see~\cite{Kroto1985}. 

Molecular graphs of fullerenes are called \emph{fullerene graphs}. Fullerene graphs are cubic, 3-connected, planar graphs with only pentagonal and hexagonal faces. The survey on fullerene graphs is presented in~\cite{Andova2016}.  It is well-known that a graph $G$ can be realized by the vertices and edges of a convex polyhedron $P$ in Euclidean space $\mathbb E^{3}$ if and only if $G$ is a 3-connected planar graph. Denote by $f_{n}$ the number of $n$-gonal faces in $P$. H.\,S.\,M~Coxeter asked whether for every $n \neq 1$ there exist polyhedra $P_{n}$ with $f_{5} = 12$, $f_{6}=n$, and $f_{k} =0$ for $k \neq 5, 6$. The affirmative answer was done in~\cite{Grunbaum1963}.  
Therefore, fullerene graphs exist for $N=20$ and all even $N \geq 24$. A computer program \emph{Buckygen}~\cite{Brinkmann2012, Goedgebeur2015} is very useful software for generating of fullerene graphs. 
Denote by $C_{N}$ a fullerene graph with $N$ vertices. All fullerenes $C_{N}$ with $N \leq 400$ have been generated~\cite{Brinkmann1997, Brinkmann2012}. Planar codes of fullerene graphs up to $130$ vertices are posted in~\cite{HoG}. The number of fullerene isomers grows very fast~\cite{{Cioslowski2014}}. For example, there exist only one fullerene isomer for $N = 20$ which is dodecahedron, same for $N = 24$ and $N=26$.  But for $N = 60$ there are $1812$ fullerene isomers~\cite{Brinkmann2012}. 

It follows from the Euler's Polyhedron Formula that any fullerene graph has exactly $12$ pentagonal faces. The total number $f$ of faces in a fullerene graph with $N$ vertices is equal to $N/2 + 2$. Therefore, the number of hexagonal faces is exactly $N/2 - 10$. Fullerenes in which no two pentagons are adjacent are called \emph{IPR} (the Isolated Pentagon Rule) fullerenes. Fullerenes of this class are deemed to be stable, see~\cite{Kroto1987}. Buckminsterfullerene has $60$ carbon atoms and it is the smallest fullerene in which all pentagons are isolated. Next examples of IPR fullerenes appears only when the number of atoms reaches $70$. Graphs of IPR fullerenes can be generated by computer program Buckygen, and the number of all non-isomorphic fullerenes and IPR fullerenes can be found in~\cite{Brinkmann2012, Goedgebeur2015}.  A topology of fullerenes was discussed in~~\cite{Schwerdtfeger2015}. 

\section{Fullerenes as Hyperbolic Polyhedra} \label{sec3} 

We observe that fullerenes can be considered from the view of Hyperbolic geometry as bounded right-angled hyperbolic polyhedra. 

Three-dimensional hyperbolic space is the three-dimensional connected and simply connected Riemannian manifold with constant sectional curvature equals to $-1$~\cite{Ratcliffe, Milnor1982}. Hyperbolic space has few specific models with their own advantages and disadvantages. 
The conformal ball model $\mathbb B^3$ of hyperbolic space is given by the unit ball 
$$
\mathbb B^{3} = \left\{ x=(x_{1},x_{2},x_{3}) \in \mathbb R^{3} \, | \, \|x \| < 1\right\}
$$ 
under the metric
$$
{ds}^2 = 4\frac{{dx_1}^2 + {dx_2}^2 + {dx_3}^2}{{(1-{\|x\|}^2)}^2}.
$$
Geodesics in $\mathbb B^3$ are either line segments through the origin or arcs of circles orthogonal to $\partial  \mathbb B^3$. The totally geodesic subspaces of $\mathbb B^3$  are the intersections with $\mathbb B^3$ of generalized spheres (spheres or hyperplanes) orthogonal to $\partial  \mathbb B^3$.
The upper half-space model $\mathbb U^{3}$ of hyperbolic 3-space can be defined as the set 
$$
\mathbb U^{3} = \left\{ (x+iy,t) \in \mathbb C  \times \mathbb R  \, | \,  t > 0 \right\} 
$$ 
under the metric
$$
{ds}^2 = \frac{{dx}^2 + {dy}^2 + {dt}^2}{t^2}.
$$
Geodesics in $\mathbb U^{3}$ are vertical lines and semicircles which are orthogonal to the boundary $\partial \mathbb U^3 = \mathbb C \cup   \left\{ \infty \right\}  $. Totally geodesic planes are vertical planes and hemispheres centered on $\mathbb C$ . 

Let $\{ H_i, \, i = 0, \ldots, n\}$ be a finite collection of hyperbolic closed half-spaces. A \emph{hyperbolic polyhedron} is an intersection 
$
P = \bigcap^n_{i=0} H_i
$
having non-empty interior
A hyperbolic polyhedron is said to be \emph{bounded}  if all its vertices belong to the interior of a hyperbolic space.

A graph is said to be  \emph{cyclically k-connected} if at least $k$ edges have to be removed to split it into two connected components both having a cycle. A polyhedron is called \emph{acute-angled} if all its dihedral angles are at most $\pi/2$. The following type of rigidity holds in a hyperbolic space. 

\begin{theorem}  \cite{Andreev1970}
A bounded acute-angled polyhedron in an $n$-dimensional hyperbolic space, $n \geq 3$, is uniquely  (up to isometry) determined by its combinatorial type and dihedral angles. 
\end{theorem}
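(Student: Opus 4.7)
The plan is to argue by contradiction: suppose $P$ and $P'$ are two non-isometric bounded acute-angled polyhedra of the same combinatorial type with matching dihedral angles $\{\alpha_e\}$ along corresponding edges. The strategy breaks into two stages: first show that the face angles at every vertex are also forced to agree, and then use this to produce an isometry $P \to P'$ by propagating from a chosen face.

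For the first stage, I would examine the link $L_v \subset S^{n-1}$ of each vertex $v$. This link is a spherical $(n{-}1)$-polytope whose dihedral angles are precisely the dihedral angles of $P$ at the edges through $v$; by hypothesis these are all at most $\pi/2$, so $L_v$ is itself acute-angled. A classical rigidity theorem for acute-angled spherical polytopes (essentially a convexity/Gram-matrix argument, going back to Coxeter in the simplicial case) says that such a link is determined up to isometry by its combinatorial type and dihedral angles. Consequently the face angles of $P$ at $v$ coincide with those of $P'$ at the corresponding vertex, for every $v$.

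For the second stage, I would fix a face $F_0$ of $P$ and apply an ambient isometry of $\mathbb{H}^n$ moving $P'$ so that $F_0' = F_0$ lies in the same hyperplane, with the same orientation. Each neighboring face $F_1$ must then lie in the unique hyperplane through the edge $F_0 \cap F_1$ making the prescribed dihedral angle with $F_0$'s supporting hyperplane, so the supporting hyperplanes of $F_1$ and $F_1'$ coincide. Iterating this step, using the vertex-angle matchings from the first stage to control the geometry around each vertex of $F_0$, propagates the agreement of supporting hyperplanes to the second neighborhood of $F_0$, and so on. Since the $1$-skeleton of $P$ is connected, this inductive propagation covers every face, forcing $P = P'$.

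The main obstacle is making the propagation rigorous: one must ensure that when a chain of face matchings loops back to an already reconstructed face there is no monodromy inconsistency, and that the reconstructed pieces actually close up into a bounded polyhedron rather than overlap or escape to infinity. The acute-angled hypothesis is essential precisely here, since it rules out fold-over behavior in the spherical vertex links. A clean way to finish is to combine infinitesimal rigidity --- obtainable via the Schl\"afli variational formula $dV = -\tfrac{1}{n-1}\sum_{e} \ell(e)\, d\alpha_e$, whose associated quadratic form can be shown to be definite under the acute-angled condition --- with a connectedness argument on the moduli space of bounded acute-angled polyhedra of the given combinatorial type, thereby upgrading local uniqueness to the global statement.
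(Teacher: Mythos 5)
The paper does not prove this statement; it is quoted verbatim from Andreev's 1970 paper as a known rigidity theorem, so there is no internal proof to compare against. Judged on its own merits, your proposal has a genuine gap at the heart of the propagation step. After you normalize $P'$ so that the supporting hyperplane of $F_0'$ coincides with that of $F_0$, you need the \emph{edge} $F_0 \cap F_1$ to coincide with $F_0' \cap F_1'$ before the dihedral-angle condition can pin down the supporting hyperplane of $F_1$. But $F_0$ is itself a hyperbolic polygon, and a hyperbolic $k$-gon with $k \geq 4$ is not determined up to isometry by its angles alone (the space of such polygons with prescribed angles has dimension $k-3$), so matching the face angles from your first stage does not force $F_0 = F_0'$ as subsets of the common hyperplane. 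Consequently the edges of the two faces may sit in different positions and the induction never gets started. This is precisely the difficulty that makes the theorem nontrivial; Andreev's actual argument is a Cauchy-type rigidity proof that compares the two polyhedra edge by edge, assigns signs to edges according to how associated quantities change, and derives a contradiction from a combinatorial count of sign changes on the boundary complex, rather than reconstructing one polyhedron inside the other face by face.

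Two further points. In your first stage, the clean way to control vertex links is the lemma (due to Coxeter for simplices and used essentially by Andreev) that a compact acute-angled spherical polytope is a simplex, so every vertex of $P$ is simple and its link is an acute-angled spherical simplex determined by its Gram matrix; you should state this rather than appeal to a general link rigidity. And your fallback via the Schl\"afli formula plus connectedness of the moduli space is a legitimate sketch of the method of continuity, but both ingredients --- definiteness of the relevant Hessian (infinitesimal rigidity) and connectedness of the space of bounded acute-angled polyhedra of a fixed combinatorial type --- are themselves major theorems whose proofs constitute the bulk of the work, so invoking them leaves the argument essentially circular.
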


Therefore, one can fix a 1-skeleton of a polyhedron and check how its geometry changes under variation of dihedral angles. We are interested in another situation, namely, let us fix all dihedral angles to be equal to $\pi/2$. Then geometry of a polyhedron, say its volume, will be uniquely determined by its combinatorics, i.e. by its 1-skeleton that is a graph. Below we will use this idea for fullerene graphs. Assuming that fullerene is realized in a hyperbolic 3-space with all dihedral angles $\pi/2$, we will correspond to a fullerene the \emph{hyperbolic volume} which already depends on fullerene combinatorics only. 

The necessary and sufficient conditions for a polyhedral graph to be realized as a bounded right-angled (with all dihedral angles equal $\pi/2$) polyhedron were obtained by A.\,V.~Pogorelov~\cite{Pogorelov1967} in 1967 and E.\,M.~Andreev~\cite{Andreev1970} in 1970. These conditions can be formulated in the following form. 

\begin{theorem}  \cite{Pogorelov1967, Andreev1970}  \label{Andreev} 
A polyhedral graph is 1-skeleton of a bounded right-angled hyperbolic polyhedron if and only if the graph is 3-regular and cyclically 5-connected.
\end{theorem}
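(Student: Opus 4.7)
The plan is to establish the two directions separately. Necessity will follow from local and global constraints imposed by hyperbolic geometry on the 1-skeleton of a bounded right-angled polyhedron; sufficiency will be a specialization of Andreev's existence theorem to the right-angled case.

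For necessity, suppose $P$ is a bounded right-angled polyhedron in $\mathbb{H}^{3}$ whose 1-skeleton is $G$. Three-regularity comes from analyzing the link of a vertex $v$: since $P$ is bounded, $v$ lies in the interior of hyperbolic space, so its link is a spherical $k$-gon (where $k$ is the degree of $v$) whose interior angles equal the dihedral angles along the edges meeting $v$. The area of a spherical $k$-gon is the angular excess $\sum \alpha_{i} - (k-2)\pi$, which must be strictly positive; with each $\alpha_{i} = \pi/2$ this forces $k < 4$, and since $k \geq 3$ at a polyhedral vertex we conclude $k = 3$. Cyclic $5$-connectivity then follows from the prismatic circuit inequalities: a cyclic $k$-edge cut of $G$ whose two sides each contain a cycle corresponds to a prismatic $k$-circuit on $\partial P$, that is, a simple closed curve crossing exactly $k$ edges transversally and bounding no single face. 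Andreev showed that the sum of dihedral angles along a prismatic $3$-circuit of a bounded acute-angled polyhedron is strictly less than $\pi$, and along a prismatic $4$-circuit strictly less than $2\pi$; in the right-angled case these sums are $3\pi/2$ and $2\pi$, contradicting both strict inequalities. Hence $G$ admits no cyclic $3$- or $4$-edge cut, i.e.\ it is cyclically $5$-connected.

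For sufficiency, let $G$ be a $3$-regular cyclically $5$-connected planar graph; by Steinitz's theorem $G$ is the 1-skeleton of an abstract convex polyhedron. I would then invoke Andreev's existence theorem: given a trivalent abstract polyhedron together with a dihedral angle assignment in $(0, \pi/2]$ satisfying Andreev's inequalities at every vertex and around every prismatic $3$- and $4$-circuit, a bounded hyperbolic realization exists (and is unique by Theorem~3.1 of the excerpt). Assigning $\pi/2$ to each edge, the vertex inequalities hold automatically by $3$-regularity, while the prismatic inequalities are vacuous because cyclic $5$-connectivity rules out prismatic $3$- and $4$-circuits altogether.

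The main obstacle is clearly the sufficiency direction, which rests on the full strength of Andreev's existence theorem. Andreev's proof proceeds via a global continuity and mapping-degree argument on the finite-dimensional moduli space of hyperbolic polyhedra combinatorially equivalent to $G$, showing that the dihedral-angle map is a homeomorphism onto the polytope of admissible angle assignments. A self-contained argument would be lengthy; my plan is therefore only to verify that the right-angled point lies in the interior of Andreev's admissibility polytope precisely under the cyclic $5$-connectivity hypothesis, and to appeal to Andreev's theorem for existence.
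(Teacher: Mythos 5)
The paper does not prove this statement: it is quoted verbatim from Pogorelov and Andreev, with no argument supplied beyond the citations, so there is no internal proof to compare against. Your proposal is the standard derivation and is essentially correct as a reduction to Andreev's general existence--uniqueness theorem for bounded acute-angled polyhedra: the vertex-link computation forcing $3$-regularity is right (angle excess $k\pi/2-(k-2)\pi>0$ gives $k<4$), and specializing Andreev's inequalities to the constant assignment $\pi/2$ does make the vertex condition automatic and turns the prismatic $3$- and $4$-circuit conditions into exactly the exclusion of cyclic $3$- and $4$-edge cuts. Two points you gloss over deserve a sentence each in a complete write-up: first, the dictionary between cyclic $k$-edge cuts of a $3$-connected cubic planar graph and prismatic $k$-circuits (plus small faces) is a combinatorial lemma, not a tautology --- one must check that a cyclic $3$- or $4$-cut cannot merely isolate a vertex or an edge, and that triangular and quadrilateral faces are likewise excluded; second, you are of course assuming the full force of Andreev's theorem, which is precisely what the paper's citation does, so your proof is not more self-contained than the paper's treatment --- it only makes the specialization explicit, which is a genuine (if modest) added value for the reader.
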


The class of bounded right-angled hyperbolic polyhedra has many nice properties and can be used to construct hyperbolic 3-manifolds by four-colorings of faces~\cite{Vesnin2017}. 

In~\cite{Doslic2001} T. Do{\v s}li{\' c} observed that every fullerene graph is cyclically 5-connected. So the following corollary holds.

\begin{corollary}
Any fullerene graph can be realized as a bounded right-angled polyhedron in three-dimensional hyperbolic space.
\end{corollary}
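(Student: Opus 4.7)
The plan is to verify the three hypotheses of Theorem~\ref{Andreev} for an arbitrary fullerene graph $G$ and then invoke that theorem directly. Two of the three conditions are built into the definition of a fullerene graph recalled in Section~\ref{sec2}: every fullerene graph is polyhedral (3-connected and planar, and hence realizable as the 1-skeleton of a convex polyhedron in $\mathbb{E}^{3}$), and it is 3-regular, since each carbon atom is bonded to exactly three others.

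The remaining condition, cyclic 5-connectedness, is the only nontrivial input. For this I would simply cite the theorem of Do\v{s}li\'{c}~\cite{Doslic2001}, already quoted in the paragraph preceding the corollary, which asserts that every fullerene graph is cyclically 5-connected. I would treat this result as a black box; reproving it would require a combinatorial argument relying on Euler's formula together with the fact that a fullerene has exactly twelve pentagonal faces. Since the authors invoke it explicitly, no such argument is needed here, and this step is the only place where anything beyond the definitions of Section~\ref{sec2} is used.

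With the three hypotheses of Theorem~\ref{Andreev} verified, we obtain a bounded right-angled hyperbolic polyhedron $P \subset \mathbb{H}^{3}$ whose 1-skeleton coincides with $G$, which is exactly the conclusion of the corollary. Moreover, by the preceding Andreev rigidity theorem applied to the acute-angled (in fact right-angled) polyhedron $P$, this realization is unique up to isometry of hyperbolic 3-space; this is what ensures that the associated hyperbolic volume is a well-defined invariant of the fullerene graph, as will be used in the remainder of the paper. The main ``obstacle'' here is really only notational: the proof is essentially a bookkeeping exercise that packages the definition of a fullerene graph, the Do\v{s}li\'{c} connectivity result, and the Pogorelov--Andreev existence theorem into a single statement.
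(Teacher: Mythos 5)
Your proof is correct and follows exactly the route the paper intends: the corollary is obtained by combining the definition of a fullerene graph (3-regular, polyhedral) with Do\v{s}li\'{c}'s cyclic 5-connectedness result and then invoking Theorem~\ref{Andreev}. The additional remark on uniqueness via Andreev's rigidity theorem matches the paper's earlier discussion and is a fine, if optional, addition.
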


In Fig.~\ref{fig1} there are presented two isomers of $C_{48}$ seen as right-angled hyperbolic polyhedra. They are drawn in the conformal ball model $\mathbb B^{3}$ using the computer scripts \emph{Hyper-hedron}~\cite{Roeder2007} and the computer program \emph{Geomview}~\cite{Geomview}. 
  \begin{figure}[h]
	\centering
	\includegraphics[width=12.5cm]{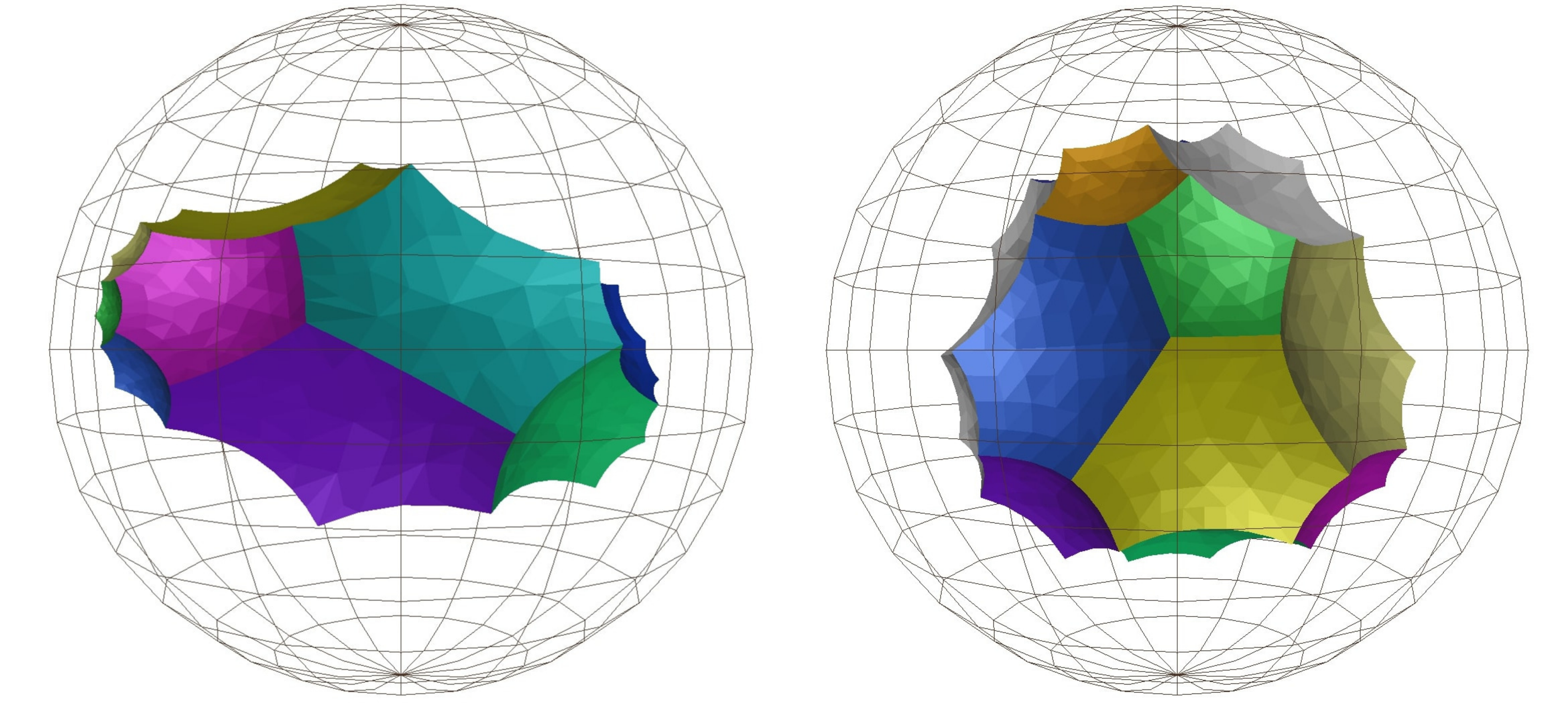} 
	\caption{Two isomers of $C_{48}$  as right-angled polyhedra in hyperbolic space.}  
	\label{fig1} 
\end{figure}
Usually, volumes of hyperbolic polyhedra are computed in terms of function
$$
\Lambda(\theta) = - \int\limits_0^{\theta} \log | 2 \sin (t) | \, {\rm d} t  \label{lob},
$$
which was introduced by J.~Milnor in the survey paper~\cite{Milnor1982} and called the \emph{Lobachevsky function}. Here and below we will give approximate values of volumes, usually up to six decimal places.  
The fullerene, presented on the left-hand side in Fig.~\ref{fig1}, has hyperbolic volume~$17.034558$ that is minimal among all $C_{48}$ fullerenes, and the fullerene, presented on the right-hand side in Fig.~\ref{fig1}, has hyperbolic volume~$18.617604$ that is maximal. 

In some cases, not only approximate values of volumes but also, explicit formula of volumes can be found. Let us consider the fullerene $C_{20}$ having the smallest number of vertices, which is a dodecahedron presented in Fig.~\ref{fig2} as a right-angled hyperbolic polyhedron. The hyperbolic volume of fullerene $C_{20}$ can be represented as follows~\cite{Vesnin1998}:  
$$
\operatorname{vol} (C_{20})=\frac{5}{2}(2\Lambda(\theta)+\Lambda(\theta+\frac{\pi}{5})+\Lambda(\theta+\frac{\pi}{5})+ \Lambda(\frac{\pi}{2}-2\theta)),
$$
where $\theta=\frac{\pi}{2}-\operatorname{arccos}(\frac{1}{2}\cos(\frac{\pi}{5}))$. This polyhedron is also called \emph{right-angled hyperbolic dodecahedron}. Numerically its volume is equal to $4,306208$ and this the smallest among volumes of all bounded right-angled hyperbolic polyhedra, as well as among all fullerenes realized as right-angled hyperbolic polyhedra.
  \begin{figure}[h]
	\centering
	\includegraphics[width=7cm]{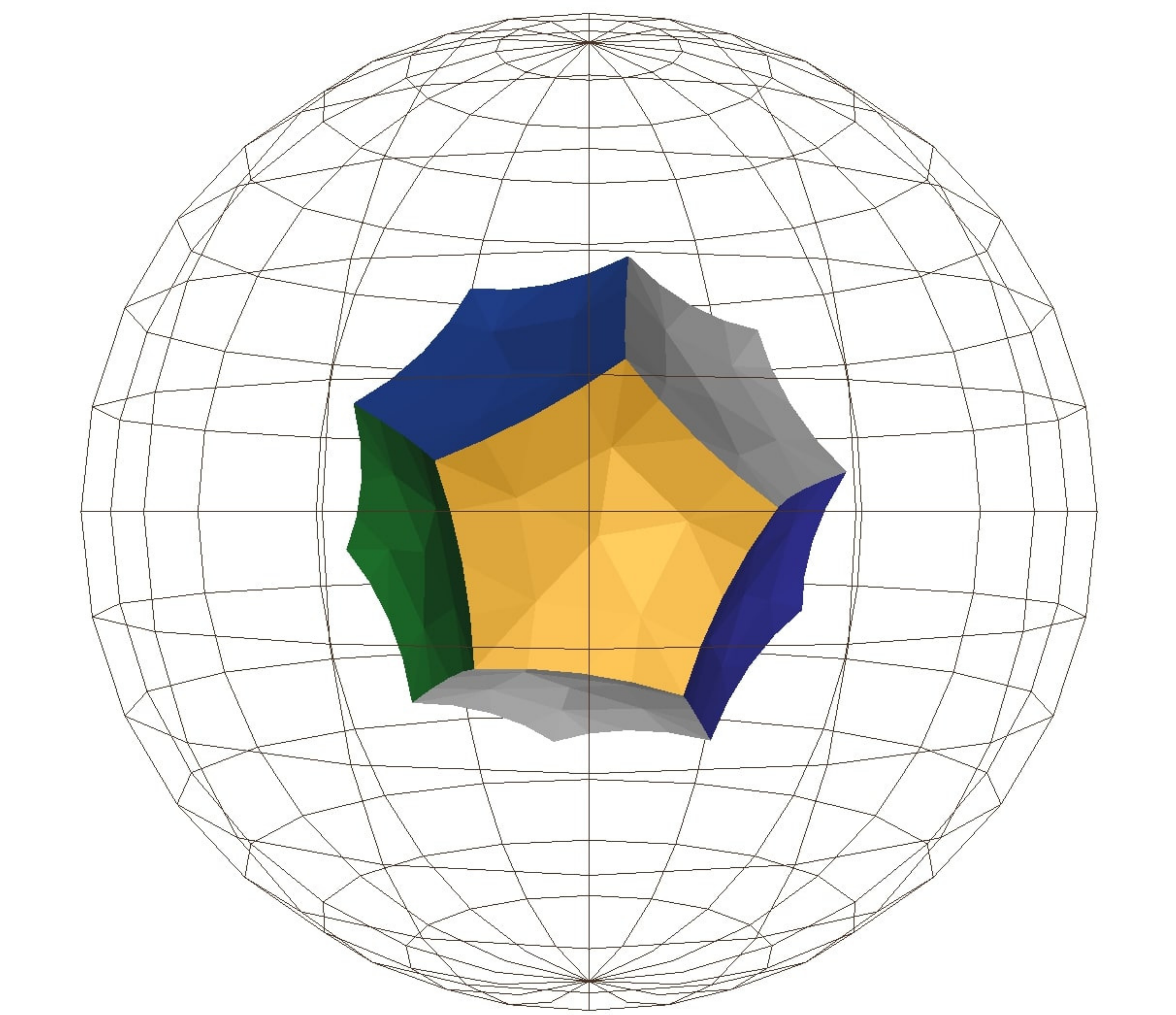} 
	\caption{Fullerene $C_{20}$ as a right-angled hyperbolic dodecahedron.}  
	\label{fig2} 
\end{figure}

Since geometry of right-angled hyperbolic polyhedron is determined by its combinatorial structure, it is natural to ask how its volume depends on some combinatorial parameters. Bilateral bounds for volumes in terms of the number of vertices were obtained by C.~Atkinson in~\cite{Atkinson2009}.

\begin{theorem} \cite{Atkinson2009}  \label{theoremAtkinson}
Let $P$ be a bounded right-angled hyperbolic polyhedron with $N$ vertices, then
$$
(N-2) \cdot \frac{v_8}{32} \leq \operatorname{vol} (P) \leq (N-10) \cdot \frac{5v_3}{8}, 
$$
where $v_{3}$ and $v_{8}$ are the following constants: 
$$
v_3 = 3\Lambda \left(\frac{\pi}{3} \right)  = 1.014941  \quad \text{\rm and} \quad  
v_8 = 8\Lambda\left(\frac{\pi}{4} \right)  = 3.663862.
$$
\end{theorem}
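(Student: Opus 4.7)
The plan is to prove the two inequalities separately by volume-comparison arguments in $\mathbb{H}^{3}$, exploiting that $v_{3}$ and $v_{8}$ are, respectively, the maximal volumes of a hyperbolic tetrahedron (the regular ideal one) and of a right-angled hyperbolic octahedron (the regular ideal one). The common combinatorial setup rests on the observation that, by Theorem~\ref{Andreev}, a right-angled bounded polyhedron $P$ is $3$-regular, so $E = 3N/2$ and $F = N/2+2$ by Euler's formula. Since every dihedral angle is $\pi/2$, each face is a right-angled hyperbolic polygon and hence (by Gauss--Bonnet applied to the face) must have at least $5$ sides, forcing a ``pentagonal deficit'' of $12$ in the Euler count. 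This is where the additive constants $-2$ and $-10$ in the two bounds will originate.

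For the upper bound, I would decompose $P$ by choosing an interior point $O$ and coning $O$ over each face after triangulating it. This produces $\sum_{f}(d_{f}-2) = 2E - 2F = 2N-4$ tetrahedra whose volumes sum to $\operatorname{vol}(P)$. Milnor's theorem that every hyperbolic tetrahedron has volume at most $v_{3} = 3\Lambda(\pi/3)$ already yields an estimate of order $N v_{3}$, but one that is too crude. The sharpening to $\frac{5 v_{3}}{8}(N-10)$ should come from replacing the coning decomposition by one into orthoschemes adapted to the right-angle condition: pairs of adjacent coning tetrahedra combine into $3$-orthoschemes, and only a fraction $5/8$ of the pieces can approach volume $v_{3}$. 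The $-10$ correction captures the presence of the twelve mandatory pentagonal faces, whose planar angles are larger than those of hexagons and whose coning tetrahedra are therefore strictly smaller.

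For the lower bound, the goal is to exhibit inside $P$ a family of pairwise-disjoint regions of total volume at least $(N-2)v_{8}/32$. A natural strategy uses the canonical $48$-fold orthoscheme subdivision of the regular ideal octahedron: at each vertex of $P$, the right-angle condition provides a local model by such an orthoscheme, and after accounting for orthoschemes shared by adjacent vertices one retains roughly $32$ orthoschemes per ideal octahedron, giving the constant $v_{8}/32$. Summing over the $N$ vertices and absorbing two extra orthoscheme copies corresponding to the Euler characteristic of the bounding sphere produces the $(N-2)$ factor.

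I expect the main obstacle to be the amortised bookkeeping that yields the sharp constants $5/8$ and $1/32$. The naive counts $2N-4$ (upper) and $N$ vertex-orthoschemes (lower) are each off by a constant factor, so one must separate ``extremal'' pieces whose volumes are nearly $v_{3}$ or $v_{8}/48$ from ``deficient'' ones and show that the deficient ones can be absorbed without loss. This requires monotonicity estimates for orthoscheme volumes as functions of their dihedral angles, and I expect that to be the most technically demanding step of the argument.
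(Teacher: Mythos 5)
First, a point of reference: the paper does not prove Theorem~\ref{theoremAtkinson} at all --- it is quoted verbatim from Atkinson~\cite{Atkinson2009} --- so your attempt has to be measured against the published proof, which is a global argument (combinatorial reductions controlled by Andreev rigidity and the Schl\"afli formula, with the compact case tied to the ideal right-angled case where the regular ideal octahedron of volume $v_8$ is extremal), not a local piece-counting argument of the kind you propose. Your upper-bound strategy cannot work for a quantitative reason you can check directly. The coning decomposition you describe has exactly $2E-2F=2N-4$ tetrahedra, and any triangulation of a convex polyhedron with $N$ vertices into genuine hyperbolic tetrahedra has at least $N-3$ pieces; hence the estimate ``each tetrahedron has volume at most $v_3$'' can never yield anything stronger than $(N-3)\,v_3$, which is weaker than the claimed $\tfrac{5}{8}(N-10)\,v_3\approx 0.625\,N v_3$ for every $N$. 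Even your own amortisation --- $\tfrac58$ of $2N-4$ pieces of volume $v_3$ --- gives $\tfrac54(N-2)\,v_3$, i.e.\ twice the stated bound. So the missing content is not bookkeeping: you would have to prove that the \emph{average} piece has volume on the order of $\tfrac{5}{16}v_3$, and the only lever you name (the twelve mandatory pentagons giving slightly smaller coning tetrahedra) is an $O(1)$ effect that cannot produce a linear-in-$N$ saving.

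The lower bound has the symmetric defect. A vertex of a bounded right-angled polyhedron is a finite point whose link is an all-right spherical triangle; no copy of an orthoscheme of the regular \emph{ideal} octahedron is canonically attached to it, you give no argument that such pieces lie inside $P$ or are pairwise disjoint, and the size of a vertex neighbourhood inside $P$ is governed by edge lengths, which your argument does not bound from below. The arithmetic also does not close: retaining $32$ of the $48$ orthoschemes of an octahedron accounts for $\tfrac{2}{3}v_8$, not $\tfrac{1}{32}v_8$ per vertex, which suggests the constants are being reverse-engineered rather than derived. In Atkinson's proof the constant arises globally: $\tfrac{v_8}{32}=\tfrac18\cdot\tfrac{v_8}{4}$, where $(N-2)\tfrac{v_8}{4}$ is his lower bound for \emph{ideal} right-angled polyhedra (sharp for the octahedron) and the factor $\tfrac18$ comes from the passage from the compact to the ideal case. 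In short, your plan correctly identifies the extremal objects behind $v_3$ and $v_8$ and the combinatorial facts ($3$-regularity, $F=N/2+2$, at least $12$ pentagons), but it replaces the necessary global rigidity and deformation arguments with local packings that are provably too weak for the upper bound and not actually defined for the lower bound.
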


An upper volume bound in Theorem~\ref{theoremAtkinson} can be improved as in following statements. For the proofs we use ideas analogous to~\cite{Vesnin2020}.  

\begin{theorem} \label{theorem34} 
Let $P$ be a bounded right-angled hyperbolic polyhedron with $N$ vertices. Let $F_{1}$ and $F_ {2}$ be two faces of $P$ such that $F_ {1}$ is $n_ {1}$-gonal and $F_ {2}$ is $n_ {2}$-gonal. Then 
$$
\operatorname{vol} (P) \leq (N-n_{1}-n_{2}) \cdot \frac{5v_3}{8}.
$$
\end{theorem}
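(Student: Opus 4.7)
The plan is to adapt the decomposition argument used by Atkinson to prove Theorem~\ref{theoremAtkinson}, following the refinement in~\cite{Vesnin2020}. A key observation motivating the generalization is that Atkinson's constant $(N-10)$ can be reinterpreted as the case $n_1=n_2=5$: in any bounded right-angled hyperbolic polyhedron every face has at least five sides (by Theorem~\ref{Andreev}, combined with the consequence of cyclic $5$-connectivity), and the Euler relation $\sum_{n\geq 5}(6-n)f_n = 12$ forces the existence of at least twelve pentagons. Atkinson's argument implicitly selects two such pentagons as ``pivots'' in a decomposition, which is where the constant $10 = 5+5$ arises. The statement of Theorem~\ref{theorem34} asserts that this pivot selection is not essential: any pair of faces $F_1,F_2$ of sizes $n_1,n_2$ may serve as pivots, giving a strict improvement whenever $n_1+n_2 > 10$.

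First I would construct, for each vertex $v\in V(P)\setminus (V(F_1)\cup V(F_2))$, a sub-polyhedron $P_v\subset P$ obtained by truncating along the three faces incident to $v$ and along cutting planes emanating from $F_1$ and $F_2$. Since $P$ is $3$-regular at every vertex (a right-angled polyhedron has exactly three faces meeting at each vertex), each $P_v$ is a hyperbolic cell of bounded combinatorial complexity. Next I would verify that the collection $\{P_v\}$ essentially tiles $P$, so that $\operatorname{vol}(P) = \sum_v \operatorname{vol}(P_v)$ up to lower-dimensional overlaps.

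The heart of the proof is the local estimate $\operatorname{vol}(P_v) \leq \frac{5v_3}{8}$. This is exactly the per-vertex bound that appears in Atkinson's original argument and its extension in~\cite{Vesnin2020}: one embeds $P_v$ into a union of five regular ideal tetrahedra, each sharing a common ideal vertex and summing to volume $\tfrac{5}{8}v_3$, using that the regular ideal tetrahedron realizes the maximal volume $v_3 = 3\Lambda(\pi/3)$ among all ideal tetrahedra. The right-angled condition at $v$ is used to force the cell to fit inside this reference configuration. Summing over the $N-n_1-n_2$ vertices in $V(P)\setminus(V(F_1)\cup V(F_2))$ then gives the stated inequality.

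The main technical obstacle is making the decomposition well-defined when $F_1$ and $F_2$ are combinatorially close --- for example when they share an edge or a vertex --- so that $|V(F_1)\cup V(F_2)| < n_1+n_2$. In that range one must check that the cells $P_v$ are still non-degenerate and that the counting respects the proper inclusion of vertex sets; if $|V(F_1)\cup V(F_2)|$ exceeds $n_1+n_2$, the stated bound becomes weaker than what the decomposition actually proves, so the inequality still holds. I expect the cleanest route is to first establish the result assuming $F_1$ and $F_2$ are disjoint (which will be the case of interest for Theorem~\ref{theorem35}), and then remark that the general case follows because $N-n_1-n_2$ is monotone under shrinking the pivot faces.
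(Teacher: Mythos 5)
Your proposal does not follow the paper's route, and as written it has gaps I do not see how to close. The paper never re-opens Atkinson's decomposition: it uses Theorem~\ref{theoremAtkinson} as a black box. For non-adjacent $F_1$, $F_2$ one glues copies of $P$ to itself alternately along faces isometric to $F_1$ and $F_2$; because all dihedral angles equal $\pi/2$, the vertices of a glued face cease to be vertices of the resulting polyhedron, so $P_{2k+1}$ has $(2k+1)N - 2kn_1 - 2kn_2$ vertices and volume $(2k+1)\operatorname{vol}(P)$, and it is again a bounded right-angled polyhedron by Theorem~\ref{Andreev}. Applying Atkinson's bound to $P_{2k+1}$, dividing by $2k+1$ and letting $k\to\infty$ gives exactly $(N-n_1-n_2)\cdot\frac{5v_3}{8}$; the adjacent case is reduced to the non-adjacent one by a single doubling along $F_1$.

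By contrast, your plan requires re-deriving Atkinson's upper bound with a modified vertex count, and the two steps you would need are precisely the ones you do not supply. First, the tiling of $P$ by cells $P_v$ indexed by $v\in V(P)\setminus(V(F_1)\cup V(F_2))$, each of volume at most $\frac{5v_3}{8}$, is asserted rather than constructed: you must say where the volume near the omitted $n_1+n_2$ vertices goes, and your reading of the constant $10$ in Atkinson's bound as arising from ``two pentagon pivots'' is a guess that you never check against his actual argument. Second, your handling of the case where $F_1$ and $F_2$ meet points the wrong way: one always has $|V(F_1)\cup V(F_2)|\leq n_1+n_2$ (it can never exceed it), and when the faces share an edge the union has only $n_1+n_2-2$ vertices, so your sum would run over $N-n_1-n_2+2$ cells and yield a bound \emph{weaker} than the one claimed. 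Thus even granting the decomposition, the adjacent case would remain unproved; note the paper needs extra work there too, which is why it doubles along $F_1$ first. I recommend switching to the gluing argument.
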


\begin{proof}
1) Assume that faces $F_ {1}$ and $F_ {2}$ are not adjacent. We construct a family of right-angled polyhedra by induction, attaching on  each step a copy of the polyhedron~$P$. Put $P_ {1} = P$. Define $P_{2}=P_ {1} \cup_{F_{1}}P_{1}$ by identifying two copies of the polyhedron $P_ {1}$ along the face $F_ {1}$. By Theorem~\ref{Andreev} $P_ {2}$ is a bounded right-angled polyhedron. It has $N_ {2} = 2N-2n_ {1}$ vertices. Indeed, faces of $P_{1}$ adjacent to $F_{1}$ form dihedral angles $\pi/2$ with $F_{1}$, so vertices of $F_{1}$ aren't vertices in the glued polyhedron $P_{2}=P_ {1} \cup_{F_{1}}P_{1}$. For the volumes we have $\operatorname{vol} (P_{2}) = 2 \operatorname{vol} (P)$. The polyhedron $P_{2}$ has at least one face isometric to $F_{2}$. Attaching the polyhedron $P$ to the polyhedron $P_{2}$ along this face we get  $P_{3} = P_{2} \cup_{F_{2}} P = P \cup_{F_{1}} P \cup_{F_{2}} P$. Evidently, $P_{3}$ is a bounded right-angled polyhedron with $N_{3} = 3N - 2n_{1} - 2n_{2}$ vertices and volume $\operatorname{vol} (P_{3}) = 3 \operatorname{vol} (P)$. Continuing the process of attaching  $P$ alternately through the faces isometric to $F_{1}$ and $F_{2}$, we get the polyhedron $P_{2k+1} = P_{2k-1} \cup_{F_{1}} P \cup_{F_{2}} P$, which is a bounded right-angled polyhedron with $N_{2k+1} = (2k+1) N - 2 k n_{1} - 2 k n_{2}$ vertices and $\operatorname{vol} (P_{2k+1}) = (2k+1) \operatorname{vol} (P)$. Now let us apply the upper bound from Theorem~\ref{theoremAtkinson} to polyhedron $P_{2k+1}$:
$$
(2k+1) \operatorname{vol} (P) \leq \left( (2k+1) N - 2 k n_{1} - 2 k n_{2} - 10 \right) \frac{5 v_{3}}{8}.  
$$
Dividing both sides of the inequality by $(2k+1)$ and passing to the limit as $k \to \infty $, we obtain the required inequality.
	
2) Assume that faces $F_{1}$ and $F_{2}$ are adjacent. Put $P_{2}=P \cup_{F_{1}} P$. The obtained polyhedron $P_{2}$ has $N_{2}=2N-2n_{1}$ vertices and its volume is twice volume of $P$. By construction, the polyhedron $P_{2}$ has a face $F_{21}$, which is constructed from two copies of $F_{2}$ and is a $(2n_{2}-4)$-gon. Since the face $F_1$ has at least 5 sides, there is a face in $P$ adjacent to $F_{1}$, but not adjacent to $F_{2}$. As a result of attaching $P$ along $F_{1}$, this face will turn into a face $F_{22}$ in a polyhedron $P_{2}$ that has at least $6$ sides.  Thus, in $P_{2}$ there is a pair of non-adjacent faces $F_{21}$ and $F_{22} $, each of which has at least $6$ sides. This situation corresponds to the already proved case (1). Thus, for the polyhedron $P_{2}$ and its non-adjacent faces $F_{21}$ and $F_{22}$ we get:
$$
2 \operatorname{vol} (P) \leq \left( N_{2} - (2n_{2} - 4) - 6 \right) \frac{5 v_{3}}{8},
$$
from where we obtain
$$
2 \operatorname{vol} (P) \leq \left( 2N-2n_{1}-2n_{2} - 2 \right) \frac{5 v_{3}}{8}
$$
and hence, 
$$
\operatorname{vol} (P) < \left( N-n_{1}-n_2 \right) \frac{5 v_{3}}{8}. 
$$
The theorem is proved.
\end{proof}	

\begin{corollary}  \label{corollary32} 
Let $P$ be a bounded right-angled hyperbolic polyhedron with $N$ vertices. Let $F_{1}$, $F_ {2}$ and $F_ {3}$ be three faces of $P$ such that $F_ {2}$ is adjacent to both $F_{1}$ and $F_{3}$, and  $F_ {i}$ is $n_ {i}$-gonal for $i=1,2,3$.
Then
$$
\operatorname{vol} (P) \leq (N-n_{1}-n_{2}-n_{3}+4) \cdot \frac{5v_3}{8}.
$$
\end{corollary}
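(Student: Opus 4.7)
The plan is to reduce the statement to Theorem~\ref{theorem34} by doubling $P$ across the middle face $F_{2}$, mimicking the gluing idea already used in the proof of Theorem~\ref{theorem34}. First I would form the polyhedron $P' = P \cup_{F_{2}} P$ by identifying two isometric copies of $P$ along $F_{2}$. By Theorem~\ref{Andreev}, $P'$ is again a bounded right-angled hyperbolic polyhedron, with $N' = 2N - 2n_{2}$ vertices (the $n_{2}$ vertices of $F_{2}$ are identified in pairs across the two copies and then drop out of the vertex set of $P'$, since all dihedral angles at them become $\pi$) and volume $\operatorname{vol}(P') = 2\operatorname{vol}(P)$.

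Next, since $F_{1}$ and $F_{3}$ are both adjacent to $F_{2}$, each of them is glued to its mirror image across $F_{2}$ into a single planar face of $P'$: the dihedral angles $\pi/2 + \pi/2 = \pi$ along the glued edge flatten out. Denote these merged faces by $\widetilde{F}_{1}$ and $\widetilde{F}_{3}$. A short combinatorial bookkeeping, analogous to case (2) of the proof of Theorem~\ref{theorem34}, shows that $\widetilde{F}_{i}$ is a $(2n_{i} - 4)$-gon: out of two copies of $F_{i}$ (each contributing $n_{i}$ edges) the two copies of the edge $F_{i} \cap F_{2}$ vanish into the interior of $P'$, and at each of the two endpoints of $F_{i}\cap F_{2}$ a pair of mirror-reflected edges coalesces into a single edge of $P'$. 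Since $F_{1} \neq F_{3}$, the faces $\widetilde{F}_{1}$ and $\widetilde{F}_{3}$ are distinct faces of $P'$.

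I would then apply Theorem~\ref{theorem34} to $P'$ and the pair of faces $\widetilde{F}_{1},\widetilde{F}_{3}$ (the theorem places no adjacency restriction on the chosen pair), obtaining
$$
\operatorname{vol}(P') \leq \bigl( N' - (2n_{1} - 4) - (2n_{3} - 4) \bigr) \cdot \frac{5 v_{3}}{8}.
$$
Substituting $N' = 2N - 2n_{2}$ and $\operatorname{vol}(P') = 2\operatorname{vol}(P)$, the right-hand side collapses to $2(N - n_{1} - n_{2} - n_{3} + 4)\cdot \frac{5v_{3}}{8}$, and dividing by $2$ yields precisely the claimed inequality.

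The only subtle step is the combinatorial check in the middle paragraph: one has to verify carefully that in $P'$ each $\widetilde{F}_{i}$ is indeed a \emph{single} $(2n_{i} - 4)$-gonal face of a bounded right-angled polyhedron, so that the hypotheses of Theorem~\ref{theorem34} apply verbatim. Once this bookkeeping is done the bound is immediate arithmetic.
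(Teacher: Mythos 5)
Your proposal is correct and follows essentially the same route as the paper's own proof: double $P$ across $F_{2}$, observe that the two copies of $F_{1}$ (resp.\ $F_{3}$) merge into a single $(2n_{1}-4)$-gon (resp.\ $(2n_{3}-4)$-gon), and apply Theorem~\ref{theorem34} to the doubled polyhedron with $N'=2N-2n_{2}$ vertices. The arithmetic then matches the paper exactly, so nothing further is needed.
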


\begin{proof}
Let us consider polyhedron $P_{1}=P \cup_{F_{2}} P$. It has $N_1=2N-2n_{2}$ vertices and its volume is twice volume of $P$. Remark that  $P_{1}$ has a face $F_{11}$ which is a union of two copies of $F_{1}$ and has $2n_1-4$ vertices. Similar, $P_{1}$ has a face $F_{13}$ which is a union of two copies of $F_{3}$ and has $2n_3-4$ vertices.  Applying  Theorem~\ref{theorem34} to polyhedron $P_{1}$ and its faces $F_{11}$ and $F_{13}$. We get
$$
2\operatorname{vol} (P) = \operatorname{vol} (P_1) \leq (N_1 - (2n_1-4)-(2n_3-4)) \cdot \frac{5v_3}{8}.
$$
Using formula for $N_1$ and dividing both sides of the inequality by $2$ we get the result.  
\end{proof}	

These new upper bounds are working best for polyhedra having large faces. But even applying them to polyhedra with only 5 and 6-gons one can obtain a slight improvement.

\begin{theorem}  \label{theorem35} 
Let $P$ be a bounded right-angled hyperbolic polyhedron with $N \geq 24$ vertices, such that 1-skeleton of $P$ is a fullerene graph. Then  
$$
(N-2) \cdot \frac{v_8}{32} \leq \operatorname{vol} (P) \leq (N-14) \cdot \frac{5v_3}{8}.
$$
\end{theorem}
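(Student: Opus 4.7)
The lower bound is immediate from Theorem~\ref{theoremAtkinson} applied to $P$ itself, so the content lies entirely in the upper bound $\operatorname{vol}(P) \leq (N-14)\cdot 5v_3/8$, a saving of $4$ over Atkinson's bound. My approach is to apply Corollary~\ref{corollary32} to three faces $F_1, F_2, F_3$ of $P$ with $F_2$ adjacent to both $F_1$ and $F_3$: the corollary produces the bound $(N - n_1 - n_2 - n_3 + 4) \cdot 5v_3/8$, which matches the target precisely when $n_1 + n_2 + n_3 = 18$. Since every face of a fullerene is a pentagon or a hexagon, this forces $F_1, F_2, F_3$ to all be hexagons, and the proof reduces to locating a three-hexagon path in the face-adjacency graph of $P$.

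For $N \geq 26$ the hexagon count $f_6 = N/2 - 10$ is at least $3$, and existence of the required path should follow from a combinatorial argument. Double-counting of edges around faces, together with $f_5 = 12$ and $f_6 = N/2 - 10$, yields the identity $E_{HH} - E_{PP} = \tfrac{3N}{2} - 60$, where $E_{HH}$ and $E_{PP}$ count hexagon-hexagon and pentagon-pentagon edges respectively. If no hexagon had two hexagonal neighbors then the hexagon-hexagon edges would form a matching of size at most $\lfloor f_6/2 \rfloor$, which together with $E_{PP} \geq 0$ becomes incompatible with the identity once $N$ is sufficiently large; the remaining finitely many small cases can be handled by direct inspection of the known enumeration of fullerene graphs.

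The case $N = 24$ is the main obstacle: the unique fullerene $C_{24}$ has exactly two hexagons, arranged at opposite poles and non-adjacent, so no three-hexagon path exists in $P$ itself. I would handle it by reflecting $P$ across a hexagonal face $H$. By Theorem~\ref{Andreev} the doubled polyhedron $P_1 = P \cup_H P$ is again bounded and right-angled; it has $2N - 12$ vertices and volume $2\operatorname{vol}(P)$, and the six pentagons of $P$ adjacent to $H$ pair with their mirror copies to form six hexagonal faces in $P_1$ arranged cyclically around the site of $H$. Three consecutive members of this cycle provide a three-hexagon path in $P_1$, and applying Corollary~\ref{corollary32} there yields an inequality for $\operatorname{vol}(P_1) = 2\operatorname{vol}(P)$. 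After division by $2$, one obtains an upper bound close to the target $(N-14)\cdot 5v_3/8$; closing the residual gap requires a further doubling step combined with Theorem~\ref{theorem34} applied to the octagonal faces that arise from second-level reflection. Carrying out this iterated reflection and tracking face sizes and vertex counts finely enough to recover the full saving of $4$ is the delicate technical point.
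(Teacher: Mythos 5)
Your strategy for large $N$ coincides with the paper's: locate three hexagons $F_1,F_2,F_3$ with $F_2$ adjacent to both others and invoke Corollary~\ref{corollary32}. Your edge-type identity $E_{HH}-E_{PP}=\tfrac{3N}{2}-60$ combined with the matching bound $E_{HH}\le \tfrac12 f_6=\tfrac{N}{4}-5$ forces a contradiction exactly when $N\ge 46$, which is the same threshold the paper reaches by counting how many pentagon sides the isolated hexagons and hexagon pairs would have to absorb. Up to that point the two arguments are equivalent reformulations of one another.

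The genuine gap is in the range $24\le N\le 44$, which the paper disposes of by direct numerical computation of all the volumes (Table~\ref{table1}) and which your proposal does not actually close. First, for $26\le N\le 44$ you assert that a three-hexagon path ``should follow'' or can be found ``by direct inspection,'' but your counting argument is silent there, and isomers in this range need not contain such a path (e.g.\ an isomer whose hexagons are pairwise non-adjacent), so inspection would at best redirect you to the same reflection device you use for $N=24$. Second, that device demonstrably falls short of the stated constant: for $C_{24}$ the doubled polyhedron has $36$ vertices and six equatorial hexagons, and Corollary~\ref{corollary32} gives $2\operatorname{vol}(P)\le (36-18+4)\cdot\tfrac{5v_3}{8}$, i.e.\ $\operatorname{vol}(P)\le 11\cdot\tfrac{5v_3}{8}$ versus the required $10\cdot\tfrac{5v_3}{8}$. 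The ``further doubling step'' you defer to is precisely where the proof would have to live, and it does not obviously converge to the target: a second reflection produces octagons and again yields $11\cdot\tfrac{5v_3}{8}$, while the natural infinite chain of copies glued through hexagons saves only $12$ vertices per copy and so converges to the even weaker bound $12\cdot\tfrac{5v_3}{8}$. Without either completing that construction or falling back on explicit volume computations for the finitely many small $N$ (as the paper does), the upper bound is unproved for $24\le N\le 44$.
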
	
	
\begin{proof}
A lower bound is the same as in Theorem~\ref{theoremAtkinson}. So, we need to prove only an upper bound. 

Let us firstly observe that in the polyhedron $P$ there are three hexagonal faces $F_{1}$, $F_{2}$, $F_{3}$ such that $F_{2}$ is adjacent to both $F_{1}$ and $F_{3}$. Suppose in contradiction that there is no such triple of faces. Then each hexagonal face is adjacent to at most one hexagonal face. If a hexagonal face has no adjacent hexagons (we will say that it is isolated), then through its six sides it is adjacent to the pentagonal faces. If two hexagons are adjacent to each other and none of them is adjacent to another hexagon (we will say that the faces form a pair), then their union is adjacent through ten sides with pentagonal faces. Therefore, if there are $k_{1}$ isolated hexagonal faces and $k_{2}$ pairs of hexagonal faces, then through their sides they are adjacent to pentagonal faces through $6 k_{1} + 10 k_{2}$ sides. Since 1-skeleton of $P$ is a fullerene graph the number of pentagons in $P$ is equal to $12$. Their total number of sides is $60$. If $P$ has $N \geq 46$ vertices, then, by $f = N/2 + 2$, it has $f \geq 25$ faces, and twelve of them are pentagons.  Then $k_{1} + 2 k_{2} \geq  13$, and hence $2k_{2} \geq 13 - k_{1}$. Therefore, $6 k_{1} + 10 k_{2} \geq 6 k_{1} + 5 (13 - k_{1}) = 65 + k_{1} > 60$. Thus, the number of pentagons is not enough. This contradiction implies that there is a triple of hexagonal faces $F_{1}$, $F_{2}$, $F_{3}$, where $F_{2}$ is adjacent to $F_{1}$ and $F_{3}$. By applying Corollary~\ref{corollary32} to polyhedron $P$ and its faces $F_{1}$, $F_{2}$, $F_{3}$, we get the required inequality for $N \geq 46$. 

For $24 \leq N \leq 46$, an inequality holds by direct computation, see Table~\ref{table1} below, where for each $N$ minimal and maximal volumes are presented. 
\end{proof}	

The elements of the class of $C_N$ fullerenes can be ordered in different ways. One way is to use some fullerene features, such as topological indices. Often these orderings are supposed to help to distinguish more stable fullerenes from less stable. Another way is to use hyperbolic geometry. Since each fullerene  can be realized as a bounded right-angled hyperbolic polyhedron, we can match each fullerene with the hyperbolic volume of the corresponding hyperbolic polyhedron. Thus, we can order isomers of $C_N$ by hyperbolic volume. Denote by $|C_{N}|$ the number of isomers of $C_{N}$ and by $hv_i^N$, $1 \leq i \leq |C_N|$, the $i$-th element of this ordering, where we assume that $hv_{i-1}^N \leq hv_{i}^N$. We will look at how this ordering is related to  orderings by some topological indices and stability of fullerenes. 

\begin{table}[!ht] \caption{Minimal and maximal hyperbolic volumes of fullerenes.}  \label{table1} 
\begin{center} 
\begin{tabular}{|l|r|r|r|r|c|} \hline
$N$ & number of  & number of  & minimal  & maximal & caps \\ 
	& isomers  & volumes & volume & volume & type \\ \hline
20 & 1 & 1 & 4.30620 & 4.30620 & - \\ \hline
22 & 0 & 0 & - & - & -\\ \hline
24 & 1 & 1 & 6.023046  & 6.023046 & -\\ \hline
26 & 1 & 1 & 6.967011  & 6.967011 & -\\ \hline
28 & 2 & 2 & 7.869948  & 8.000234 & - \\ \hline
30 & 3 & 3 &  8.612415 & 8.946606  & a \\ \hline
32 & 6 & 6 &  9.677265  & 9.977170 & b\\ \hline
34 & 6 & 6 & 10.753476 & 10.986057 & - \\ \hline
36 & 15 & 15 & 11.537549  & 12.084191& c \\ \hline
38 & 17 & 17 & 12.380366  & 13.138893 & b \\ \hline
40 & 40 & 40 & 12.918623 & 14.222648 & a  \\ \hline
42 & 45 & 44 & 14.582147 & 15.300168 & d \\ \hline 
44 & 89 & 88 & 15.084432 & 16.397833 & b\\ \hline
46 & 116 & 116& 16.489809 & 17.486616 & d \\ \hline
48 & 199 & 19& 17.034558 & 18.617604 & c \\ \hline
50 & 271 & 270& 17.224830  & 19.767011& a \\ \hline
52 & 437 & 437& 18.866563  & 20.880946 & c\\ \hline
54 & 580 & 576& 20.308384  & 22.001239& d \\ \hline
56 & 924 & 922& 20.492318 & 23.142929& b  \\ \hline
58 & 1205 & 1205 & 22.217458  & 24.326618 & d \\ \hline
60 & 1812 & 1805 & 21.531038 & 25.558609 & a \\ \hline
62 & 2385 & 2376 & 25.558609 & 26.588511& b  \\ \hline
64 & 3465 & 3455& 24.362347  & 27.763266 & c\\ \hline
\end{tabular}
\end{center}
\end{table} 

We calculated volumes of all fullerenes with at most 64 vertices. All volumes are rounded to 6 decimal places. The results of these computations are reproduced in the diagram shown in Fig.~\ref{fig.3}. From Table~\ref{table1} one can see that almost all $C_N$ fullerenes for fixed $20 \leq N \leq 64$ have different values of volume. However, some fullerenes have the same hyperbolic volume. There could be two reasons for this fact. One reason is a calculation accuracy, indeed we have only a volume value rounded to 6 decimal places. Another reason is that two fullerenes considered as right-angled hyperbolic polyhedra really can have the same volume value.  
\begin{figure}[h]
	\centering
	\includegraphics[width=13.5cm]{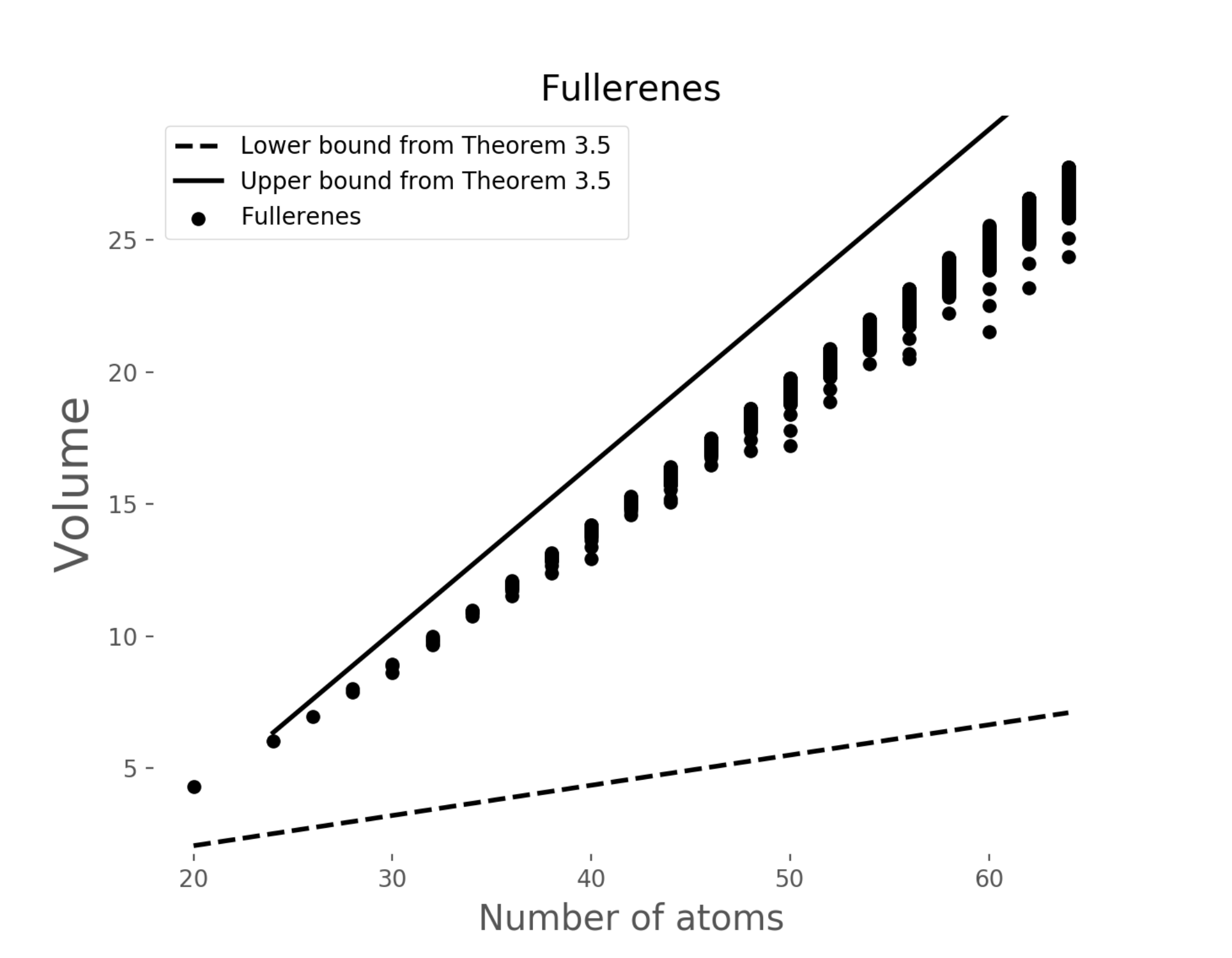} 
	\caption{Volumes of fullerenes with number of vertices from $20$ to $64$.}  
	\label{fig.3} 
\end{figure}

If two fullerenes have the same number of carbon atoms, we can compare them by their hyperbolic volume. But if two fullerenes have different numbers of atoms, then the volume of the one that has more atoms almost always is larger. 

We also suggest another way to understand how large is a hyperbolic fullerene. One can use \emph{hyperbolic sphericity}, which is a kind of normalization of volume by the surface area. Hyperbolic sphericity shows how much hyperbolic volume of fullerene is close to the volume of a hyperbolic ball of the same area. 
We recall that the volume and area of hyperbolic 3-ball $B_{r}$ of radius $r$ can be found by the following formulae:
$$ 
\operatorname{vol}(B_{r})=\pi(\operatorname{sinh}(2r)-2r) \qquad  \text{and} \qquad \operatorname{area}(B_{r})=2\pi(\operatorname{cosh}(2r)-1).
$$
Hyperbolic sphericity of a hyperbolic polyhedron $P$ is defined as follows
$$ 
\operatorname{Sp} (P)=\frac{\operatorname{vol}(P)}{\pi(\sinh (\operatorname{arccosh}(\frac{\operatorname{area}(P)}{2\pi}+1)) - \operatorname{arccosh} (\frac{\operatorname{area}(P)}{2\pi}+1))},
$$
where $\operatorname{area}(P)$ is the surface area of $P$. It is easy to see that $\operatorname{Sp}(B_{r})=1$ for any radius $r$. If $P$ is not a hyperbolic ball then $0 \leq \operatorname{Sp}(P) < 1$. If $P$ is a bounded right-angled hyperbolic polyhedron with $N$ vertices, then  $\operatorname{area}(P)=\pi(\frac{N+4}{2}-6)$ and 
$$ 
\operatorname{Sp} (P )=\frac{\operatorname{vol} (P)}{\pi( \sinh(\operatorname{arccosh} (\frac{N-4}{4})) - \operatorname{arccosh} (\frac{N-4}{4}))}.
$$
Here the denominator depends only on the number of vertices. Therefore, if we fix the number of vertices then sphericity is monotonously dependent on volume. Consequently, in a class of $C_N$ fullerenes the  orderings by hyperbolic volume and by hyperbolic sphericity are the same.

Above we considered fullerenes as bounded hyperbolic polyhedra with all dihedral (and all planar) angles equal $\pi / 2$. These angled were chosen for our convenience. Remark that by Andreev's theorem~\cite{Andreev1970} any fullerene graph can be realized as 1-skeleton of a bounded hyperbolic polyhedron with dihedral angles $\alpha_i$ satisfying $\pi/3  < \alpha_i \leq \pi / 2$. If some angles are less or equal to $\pi/3$, then the condition is: for any vertex of the polyhedron, the sum of the dihedral angles corresponding to the edges incident this vertex must be greater than $\pi$. That is, for any vertex $v_i$ the inequality holds: $\alpha_{i_{1}}+\alpha_{i_{2}}+\alpha_{i_{3}} > \pi$, where $\alpha_{i_{1}}, \alpha_{i_{2}}, \alpha_{i_{3}}$ are dihedral angles at edges incident to $v_i$. Replacing the inequality by equality $\alpha_{i1}+\alpha_{i2}+\alpha_{i3} = \pi$, we arrive to the notion of an \emph{ideal} hyperbolic polyhedron, which is a polyhedron with all vertices on the boundary of the hyperbolic space. In this case we can consider fullerenes as ideal hyperbolic polyhedra, see Rivin's theorem~\cite{Rivin1996}. 
As an example, let us consider fullerenes as ideal $\pi/3$-equiangular hyperbolic polyhedra. As well as volume bounds for right-angled hyperbolic polyhedra, C.~Atkinson~\cite{Atkinson2009} proved upper and lower volume bounds for the ideal polyhedra with all dihedral angles equal to~$\pi/3$. 

\begin{theorem} \cite{Atkinson2009} \label{theorem36}
Let $P$ be an ideal $\pi/3$-equiangular polyhedron with $N>4$ vertices, then
$$
N \cdot \frac{3v_3}{8} \leq \operatorname{vol} (P) \leq (3N-14) \cdot \frac{v_3}{2}.
$$
\end{theorem}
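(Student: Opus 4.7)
The plan is to first establish the combinatorial structure of $P$ from Euler's formula and the ideal-vertex condition, and then derive the two bounds separately, both leveraging the regular ideal hyperbolic tetrahedron with volume $v_3 = 3\Lambda(\pi/3)$ as the canonical $\pi/3$-equiangular building block. Since $P$ is ideal, a small horosphere centered at any vertex $v$ cuts $P$ in a Euclidean polygon whose interior angles coincide with the dihedral angles at the edges through $v$; if $v$ is $k$-valent these must sum to $(k-2)\pi$, and $\pi/3$-equiangularity forces $k = 3$. Hence $P$ is $3$-regular, so $E = 3N/2$ and $F = N/2 + 2$; the same horospherical reasoning applied to face cycles (as in Rivin's theorem) further forces every face to be at least a quadrilateral, ruling out small pathological combinatorial types.

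For the upper bound, I would invoke the classical result of Milnor that the regular ideal tetrahedron uniquely maximizes volume among hyperbolic tetrahedra, with volume exactly $v_3$. I would triangulate $\partial P$ by cutting each $k$-gonal face into $k-2$ ideal triangles via interior diagonals, producing $2E - 2F = 2N - 4$ boundary triangles in total, and then decompose the interior of $P$ into ideal tetrahedra whose outer faces are drawn from this triangulation. A careful cone-type construction rooted at a chosen ideal vertex and exploiting $3$-regularity to avoid auxiliary Steiner points yields a decomposition into at most $(3N-14)/2$ ideal tetrahedra; since each contributes volume at most $v_3$, the upper bound follows.

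For the lower bound the strategy is to localize contributions to volume at each of the $N$ ideal vertices. At each vertex $v$, the three dihedral angles $\pi/3$ together with the horospherical link, which is a Euclidean equilateral triangle, determine a rigid local corner geometry patterned on the regular ideal $\pi/3$-equiangular tetrahedron. I would associate to $v$ a canonical neighborhood obtained by an orthoscheme decomposition of the corner, carrying volume at least $3v_3/8$, and then check by a horospherical (Voronoi-type) partition that the $N$ chosen neighborhoods have pairwise disjoint interiors; summing yields $\operatorname{vol}(P) \geq N \cdot 3v_3/8$.

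The hardest part will be getting the sharp constants in both bounds. For the upper bound the count $(3N-14)/2$ is strictly tighter than a naive fan triangulation of the interior (which would produce on the order of $2N - 4$ tetrahedra), so one must design an efficient decomposition that merges triangles wherever the $3$-regular structure allows, and carefully track the $-14$ correction coming from the starting cone. For the lower bound the subtlety is choosing the vertex neighborhoods in a canonical way so that they simultaneously achieve the threshold $3v_3/8$ and tile a subset of $P$ without overlap, which is nontrivial because horospheres at different ideal vertices can compete for space near shared edges; resolving this interaction is the combinatorial-geometric core of the argument.
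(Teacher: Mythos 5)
This statement is quoted from \cite{Atkinson2009}; the paper contains no proof of it, so the only internal point of comparison is the proof of the Proposition in Section~3, which explicitly describes Atkinson's lower-bound machinery. Measured against that, your proposal has genuine gaps in both directions. For the lower bound, Atkinson's route is not a per-vertex Voronoi partition but an independent-set argument: for each vertex $v$, the ideal tetrahedron $T_v$ spanned by $v$ and its three neighbours is regular (its three dihedral angles at $v$ are each at most $\pi/3$ and sum to $\pi$, hence all equal $\pi/3$), has volume exactly $v_3$, and is contained in $P$; tetrahedra attached to an independent set of vertices have pairwise disjoint interiors; and a triangle-free cubic planar graph has independence number at least $3N/8$ (Heckman--Thomas), which is where the constant $\frac{3v_3}{8}$ actually comes from. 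This is precisely what the paper's own Proposition signals when it says it ``reiterates'' Atkinson's proof with ``a more precise lower bound for an independence number.'' Your alternative --- a canonical neighbourhood of volume at least $3v_3/8$ at every vertex, with pairwise disjoint interiors --- is never constructed: you give no reason why an orthoscheme decomposition of a trivalent ideal corner should produce the constant $3/8$, and you explicitly defer the disjointness of the $N$ neighbourhoods, which is the entire content of the bound. As written this half is a plan, not a proof.

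For the upper bound, the tetrahedron count you assert does not follow from the construction you describe. Coning a triangulated boundary sphere ($2N-4$ triangles) from an ideal vertex $p$ produces $2N-4-\deg(p)$ tetrahedra, where $\deg(p)$ is the degree of $p$ in the triangulated sphere. Since $P$ is $3$-regular, fan-triangulating the three faces at $p$ (of sizes $k_1,k_2,k_3$) from $p$ gives $\deg(p)=k_1+k_2+k_3-6$, hence $2N+2-(k_1+k_2+k_3)$ tetrahedra; to reach $(3N-14)/2$ you would need $k_1+k_2+k_3\geq N/2+9$, which fails whenever face sizes are bounded (in particular for fullerene-type polyhedra with only pentagons and hexagons). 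So the estimate ``number of tetrahedra times $v_3$'' applied to any cone decomposition yields roughly $2Nv_3$, which is \emph{weaker} than the claimed $(3N-14)\frac{v_3}{2}$. The factor $\frac{v_3}{2}$ --- half the maximal ideal tetrahedron volume --- is a signal that the argument must exploit the $\pi/3$ angle condition to bound the volume of the pieces strictly below $v_3$, not merely count them; ``merging triangles wherever the $3$-regular structure allows'' does not supply that idea. Your preliminary combinatorial observations ($3$-regularity of ideal $\pi/3$-equiangular polyhedra, absence of triangular faces for $N>4$) are correct, but both quantitative steps that carry the theorem are missing.
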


For fullerenes, the lower bound in Theorem~\ref{theorem36} can be improved a bit as follows. 

\begin{proposition}  
Let $P$ be an ideal $\pi/3$-equiangular hyperbolic polyhedron with $N \geq 20$ vertices, such that 1-skeleton of $P$ is a fullerene graph. Then  
$$
v_3 \cdot \ceil{\frac{N}{2}-\sqrt{\frac{3}{5}N}} \leq \operatorname{vol} (P) 
$$
\end{proposition}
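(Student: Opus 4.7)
The plan is to refine Atkinson's lower bound (Theorem~\ref{theorem36}) by combining it with a fullerene-specific gluing/counting argument, in the style of the proof of Theorem~\ref{theorem35}.

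First I would observe that Atkinson's inequality $\operatorname{vol}(P)\geq N\cdot 3v_3/8$ already implies the claimed estimate whenever $3N/8\geq N/2-\sqrt{3N/5}$, equivalently $\sqrt{3N/5}\geq N/8$, i.e., $N\leq 192/5$. This disposes of the small cases $N\in\{20,24,26,28,30,32,34,36,38\}$. The remaining regime $N\geq 40$ is where the proposition is genuinely stronger than Atkinson's bound; there the asymptotic improvement factor is $4/3$, so a real construction is needed.

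For $N\geq 40$, the plan is to attach copies of $P$ to itself along suitably chosen hexagonal faces, producing a larger ideal $\pi/3$-equiangular polyhedron $\widetilde P$, apply Atkinson's lower bound to $\widetilde P$, and then transfer the estimate back to $P$ by dividing by the number of copies. In the right-angled setting the natural operation is doubling along a face ($2\cdot\pi/2=\pi$); in the ideal $\pi/3$-equiangular setting the analogue is a tripling, because $3\cdot\pi/3=\pi$ forces the edges of the common face to degenerate, which is precisely what keeps the resulting polyhedron in the ideal $\pi/3$-equiangular class. A tripling along a hexagonal face $F$ yields a polyhedron with $3N-12$ vertices and volume $3\operatorname{vol}(P)$; iterating along a chain $F_1,F_2,\dots$ of hexagonal faces (as in Theorem~\ref{theorem34} and Corollary~\ref{corollary32}) and passing to the limit drives the per-vertex coefficient of Atkinson's bound from $3/8$ upward toward $1/2$.

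The correction term $\sqrt{3N/5}$ should enter via a counting argument limiting the length of the tripling chain. The availability of hexagonal faces is constrained by the $12$ pentagons: there are at most $5\cdot 12=60$ pentagon--hexagon adjacencies, and hexagons adjacent to pentagons can obstruct the chain. A Cauchy--Schwarz-type estimate (essentially the same style of counting as in the proof of Theorem~\ref{theorem35}) bounds the number of hexagonal faces that must be omitted from the chain by $\sqrt{3N/5}$; since $P$ has $N/2 - 10$ hexagons (and $F-2=N/2$ faces overall), the chain length is at least $\lceil N/2-\sqrt{3N/5}\rceil$, which is the integer count of volume-$v_3$ contributions, explaining the ceiling in the statement.

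The main obstacle is the geometric realization of the tripling. Unlike doubling, one cannot physically place three copies of $P$ on opposite sides of a single $2$-face, so ``tripling along a face'' must actually be carried out as a $6$-fold gluing around edges of the face (using $6\cdot\pi/3=2\pi$), keeping careful track of which edges retain their $\pi/3$ dihedral angle and which degenerate. The delicate step is to verify that the resulting object is again a bounded ideal $\pi/3$-equiangular polyhedron, so that Theorem~\ref{theorem36} may legitimately be applied to it before the limit is taken.
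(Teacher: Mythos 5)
Your approach diverges fundamentally from the paper's, and it cannot be repaired. The paper's proof is a one-step packing argument: for each vertex $v$ of an ideal $\pi/3$-equiangular polyhedron, $v$ together with its three neighbours spans a regular ideal tetrahedron of volume $v_3$ contained in $P$; choosing the vertices from an independent set makes these tetrahedra have pairwise disjoint interiors; and by Faria--Klein--Stehl\'{\i}k a fullerene graph has independence number at least $\frac{N}{2}-\sqrt{\frac{3}{5}N}$. Summing volumes gives the claim (the ceiling appears because the independence number is an integer). This is literally Atkinson's own lower-bound argument with a sharper independence-number input for fullerene graphs --- the term $\sqrt{3N/5}$ is the independence-number defect from \cite{Faria2012}, not a Cauchy--Schwarz count of pentagon--hexagon adjacencies.

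The fatal flaw in your plan is the claim that iterated gluing and passing to the limit ``drives the per-vertex coefficient from $3/8$ upward toward $1/2$.'' It cannot: if $P_k$ is built from $k$ copies of $P$, it has $kN-O(k)$ vertices and volume $k\operatorname{vol}(P)$, so applying $\operatorname{vol}(P_k)\geq N_k\cdot\frac{3v_3}{8}$ and dividing by $k$ returns $\operatorname{vol}(P)\geq (N-O(1))\cdot\frac{3v_3}{8}$. The gluing-and-limit trick only adjusts the additive constant of a linear bound (exactly as in Theorems~\ref{theorem34} and~\ref{theorem35}); it is invariant on the leading coefficient, so it can never upgrade $\frac{3v_3}{8}N$ to $\frac{v_3}{2}N-O(\sqrt{N})$. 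Two further problems: (i) ``tripling along a face'' is not a geometric operation --- only two half-spaces meet along a $2$-face, and doubling two $\pi/3$-angles along the seam produces dihedral angles $2\pi/3$, which leaves the $\pi/3$-equiangular class so that Theorem~\ref{theorem36} no longer applies; (ii) even your reduction of the small cases to Atkinson's bound fails for $N=34,36,38$ once the ceiling is taken into account (e.g.\ for $N=38$ Atkinson gives $14.25\,v_3$ while the proposition asserts $15\,v_3$).
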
	

\begin{proof}
This proof is just a reiteration of the one described in~\cite{Atkinson2009}. The difference is in using more precise lower bound for an independence number.

For any given vertex $v \in P$, $v$ along with the three adjacent vertices of $P$ are the vertices of an ideal tetrahedron with all dihedral angles equal $\pi/3$. The tetrahedron is contained in $P$. If we take a collection of such tetrahedra with disjoint interiors, then the sum of their volumes will be not more than the volume of $P$. Such a collection may be constructed by taking an independent set of vertices of $P$. Denote by $V(P)$ the set of vertices of $P$. A set $V_{1} \subset V(P)$ is \emph{independent}  if no two vertices from $V_1$ are adjacent in $P$. The cardinality of any largest independent set in $P$ is called the \emph{independence number} of $P$ and denoted by $\alpha(P)$. It is known~\cite{Faria2012} that a fullerene graph $P$ with $N$ vertices contains an independent set of cardinality at least $\alpha_{P} \geq \frac{N}{2}-\sqrt{\frac{3}{5}N}$. Applying this result, we obtain the required inequality.
\end{proof}

One can expect that the consideration of the volumes of fullerenes as hyperbolic polyhedra with dihedral angles differ from $\pi/2$ will give the similar orderings of fullerenes. For instance, the Pearson correlation coefficient (PCC) of orderings by hyperbolic volumes of $C_{60}$ fullerenes considered as hyperbolic polyhedra with all dihedral angles $\pi/2$ and with all dihedral angles $\pi/3$ is equal to $0.999416$.

\section{Topological Indices} \label{sec4} 

In general, a topological index is a numerical invariant of a chemical graph. Topological indices are widely used in chemical graph theory, molecular topology, and mathematical chemistry as molecular descriptors. We will start with one of the oldest topological indices known as Wiener index. 

\subsection{Wiener index and its generalizations} 

Wiener index was introduced in 1947 by American chemist Harry Wiener \cite{Wiener1947}. After that Wiener index has found a lot of application in chemestry, see \cite{Dobrynin2001, Todeschini2000, Knor2016}. Let $G$ be a graph, $V(G)$ be the set of all vertices of $G$. Denote by $d(u,v)$ the distance between vertices $u$ and $v$ in $G$, i. e., the number of edges in the shortest path connecting $u$ and $v$. Fix $u \in V(G)$. The sum of distances from $u$ to all the other vertices of $G$ is called \emph{transmission} of vertex $u$, $\operatorname{tr}(u)=\sum_{v \in V(G) } d(u,v)$. The number of different vertex transmissions in $G$ is called the \emph{Wiener complexity} \cite{Alizadeh2016}. If all vertices of a graph have different transmissions then the graph is called \emph{transmission irregular}. It is shown in \cite{Dobrynin2019} that there do not exist transmission irregular fullerene graphs with less than $217$ vertices.    

\emph{Wiener index} is defined as a half of the sum of all vertex transmissions 
$$
W(G) = \frac{1}{2} \sum_{u \in V(G) } \operatorname{tr}(u)  =  \frac{1}{2} \sum_{\left\{u,v\right\}  \subset V(G) } d(u,v).
$$
There are many generalization of Wiener index are known. One of them is \emph{Hyper Wiener index} defined as a linear combination of distances and their squares:  
$$
WW(G) = \frac{1}{2} \sum_{\left\{u,v\right\}  \subset V(G) }^{} (d(u,v)+{d(u,v)}^2). 
$$

Let us define another generalization of Wiener index as follows. Let  $G^{*}$ be a graph dual to a fullerene graph $G$. To vertex $v \in G^{*}$ of degree five the corresponding object $G$ is a pentagonal face.  For a fullerene graph $G$ define $W_5(G)$ as the sum of squares of all distances between vertices of degree $5$ in a dual graph $G^{*}$: 
$$
W_5 (G) = \frac{1}{2} \sum_{\tiny \begin{gathered} \left\{ u,v \right\}  \subset V(G^*) \\ \operatorname{deg}(u) = \operatorname{deg}(v)=5 \end{gathered}} {d^*(u,v)}^2,
$$
where $d^*(u, v)$ is the distance between $u$ and $v$ in $G^*$.

\subsection{Pentagon Adjacency Index} 
Let $G$ be a fullerene graph. 
Pentagon-neighbor signature is a vector $(p_0, p_1, p_2, p_3, p_4, p_5)$, where each entry $p_k$, $k=0, 1, \ldots, 5$, counts those pentagons which have exactly $k$ pentagonal neighbors. The \emph{pentagon adjacency index} $N_p$ is defined in the following way: 
$$
N_p (G) = \frac{1}{2} \sum_{k=0}^{5} k p_k, 
$$
In other words, $N_p$ is the number of pairs of adjacent pentagons in a fullerene graph.

The pentagon adjacency index $N_p$ is one of the most popular topological predictors of fullerene stability. In most cases, for the stability prediction of lower fullerene isomers $C_N$ with $N \leq 70$ the index $N_P$ is used \cite{Fowler1995, Reti2009, Albertazzi1999, Campbell1996}. When $N$ is getting larger, the number of different fullerenes having the same $N_p$ is growing fast. For instance, the number of IPR fullerenes with $120$ vertices (IPR fullerenes are exactly those fullerenes for which $N_p=0$) is equal to $10774$. The index $N_p$ does not distinguish such fullerenes. For larger $N$ we can get more information about a fullerene structure by looking at hexagons rather than pentagons.

We will also use kind of a generalization of $N_p$ which is the second moment of the pentagon neighbor signature: 
$$
H_5 (G) =\sum_{k=0}^{5} k^2 p_k.
$$

\subsection{The second moment of the hexagon neighbor signature.} 
Similarly, a hexagon-neighbor signature $(h_0, h_1, h_2, h_3, h_4, h_5, h_6)$ was defined in~\cite{Fajtlowicz2003}. Here each entry $h_k$, $k=0, 1, \ldots, 6$, counts those hexagons which have exactly $k$ hexagonal neighbors. The topological index 
$$
H_6 =\sum_{k=0}^{6} k^2 h_k
$$
is sometimes referred as a \emph{Fowler-Manolopoulos Predictor of Fullerene Stability}~\cite{Ju2010}. It has a strong correlation with the stability of fullerenes, see ~\cite{Doslic2005}.

\section{Hyperbolic Volume, Topological indices and Stability of Fullerenes} \label{sec5} 

In~\cite{Sure2017} there are presented computations of  the relative energies of all $1812$ isomers of $C_{60}$ and testing of $26$ chemical descriptors for their correlation with the energetic ordering of these isomers. The authors identify $7$ rules among $26$ which they call \emph{good stability criteria}. These criteria are defined as 
\begin{itemize}
\item[(1)] the one which can identify correctly the first two energetically most stable and the three energetically least stable isomers in the correct energetic order; 
\item[(2)] the Pearson correlation coefficient (PCC) of linear correlation between the relative energies of all $C_{60}$-isomers and their criterion values should be larger than $0.6$; 
\item[(3)] the slope and the Pearson correlation coefficient in the linear regression of relative energy vs. the criterion for $C_{60}$-isomers with $N_p \in \left\{4, \ldots, 14\right\} $ should have the same sign. 
\end{itemize} 

It is interesting that hyperbolic volume satisfies these three requirements. It can be seen from  Fig.~\ref{fig4} that there are two isomers of $C_{60}$ with the largest volume coincide with two having the smallest relative energy. And also three isomers of $C_{60}$ with the smallest volume coincide with three having the largest relative energy. The values of volume of this five fullerenes and their relative energies are shown in Table~\ref{table2}.
  \begin{figure}[h]
	\centering
	\includegraphics[width=12cm]{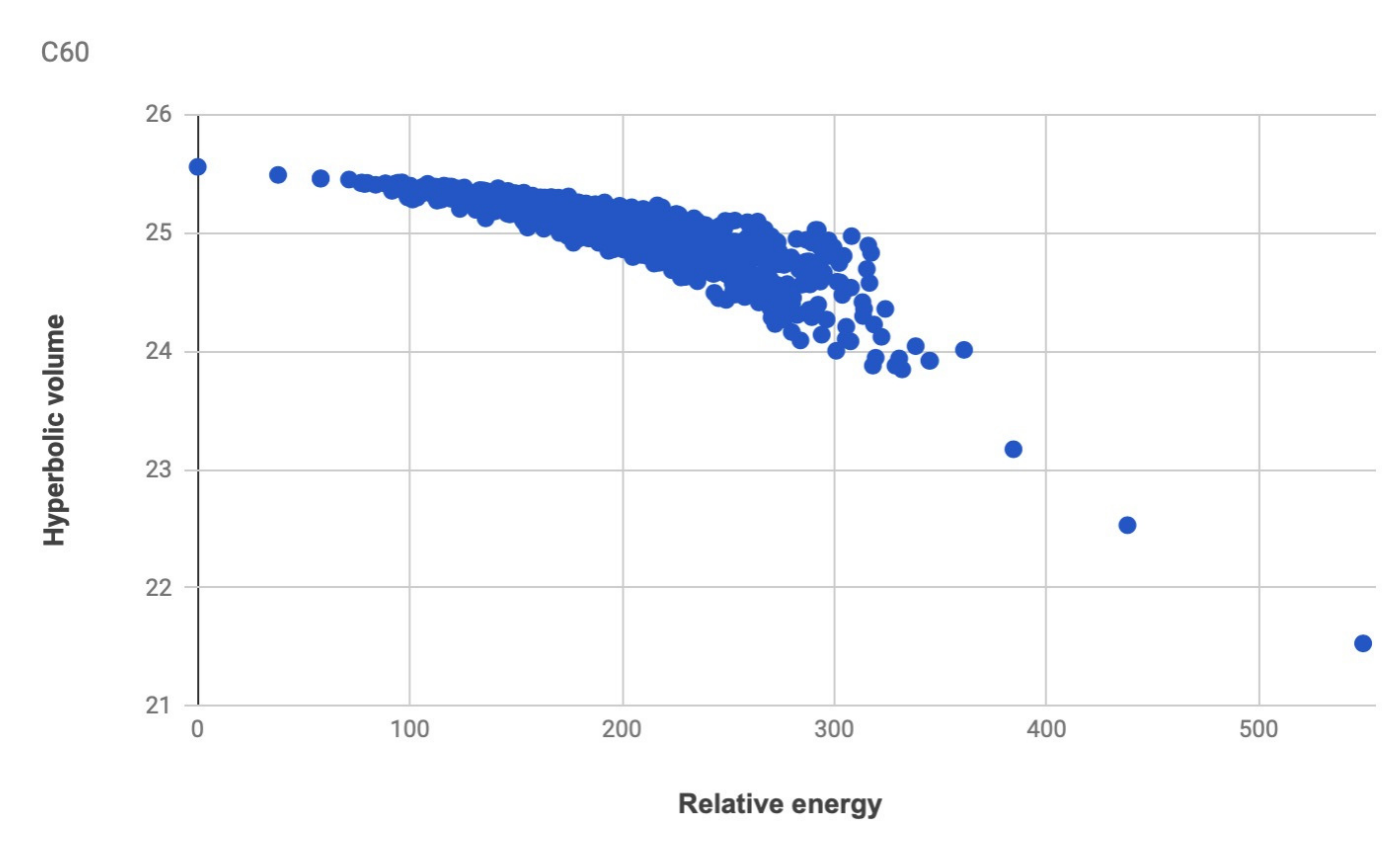} 
	\caption{Scatter chart of volume and relative energy} 
	\label{fig4} 
\end{figure}
\begin{table}[h] \caption{Three smallest and two largest vo\-lumes and corresponding relative energies.} 
	\label{table2} 
	\centerline{ 
		\begin{tabular}{|l|l|l|l|l|l|} \hline
			Isomer ordering & Volume  &  Relative energy \\ \hline 
			$hv_{1}^{60}$  & 21.531038 & 549.0652657  \\ \hline		
			$hv_{2}^{60}$  & 22.530413 & 438.02482  \\ \hline		
			$hv_{3}^{60}$  & 23.172024 & 384.307823  \\ \hline		
			$\vdots$ & $\vdots$ & $\vdots$ \\ \hline  
			$hv_{1811}^{60}$  & 25.490485 & 37.87200704 \\ \hline		
			$hv_{1812}^{60}$  & 25.558609 & 0  \\ \hline		
			\end{tabular} 
		}
		\end{table} 

It is demonstrated in  Table~\ref{table3}  that the correlation between volume and relative energy is $-0.862611$ which absolute value is bigger than $0.6$. 

\begin{table}[!ht] \caption{Pearson correlation coefficients between the hyperbolic vo\-lu\-me, topological indices and relative energy of isomers of $C_{60}$}  \label{table3} 
	\centerline{ \tiny
		\begin{tabular}{|l|l|l|l|l|l|l|l|l|} \hline
		  &Hyperbolic & $W$ & $WW$ & $W_5$ & $N_p$ & $H_5$ & $H_6$ &  Relative \\ 
		  &volume & & & & & &  & energy  \\ \hline
			Hyperbolic& 1 & -0.831511& -0.822173 & -0.960028 & -0.884936 & -0.944754 & -0.914185 & -0.862611\\ 
			volume & & & & & & & & \\ \hline
			$W$ &-0.831511&1&0.998522&0.779468&0.625317&0.680390&0.651882&0.627104\\ \hline
			$WW$ &-0.822173&0.998522&1&0.768740&0.595728&0.661271&0.625076&0.598044\\ \hline
			$W_5$ &-0.960028&0.779468&0.768740&1&0.849136&0.882059&0.899071&0.878219\\ \hline
			$N_p$ &-0.884936 &0.625317&0.595728&0.849136&1&0.961484&0.985533&0.952449\\ \hline
			$H_5$ &-0.944754&0.680390&0.661271&0.882059&0.961484&1& 0.946223&0.906894\\ \hline
			$H_6$ & -0.914185&0.651882&0.625076&0.899071&0.985533&0.946223&1&0.961179\\ \hline
			Relative &-0.862611&0.627104&0.598044&0.878219&0.952449&0.906894&0.961179&1\\ 
			energy & & & & & & & & \\ \hline
	\end{tabular} }
\end{table} 

In Table~\ref{table4} one can see that slopes and correlation coefficients for all $N_p \in \left\{4, \ldots,14\right\}$  have the same sign. So, the hyperbolic volume can be regarded as a good stability criteria.
Scatter charts of volume and some topological indices of $C_{60}$ fullerenes are demonstrated in Fig.~\ref{fig.9}, where Wiener index, $N_{p}$, Hyper-Wiener index, and $W5$ are presented. 

\begin{table}[!ht] \caption{Slopes of the linear fits and Pearson correlation coefficients of relative energy vs. hyperbolic volume for the given $N_p$.} \label{table4} 
	\centerline{ 
		\begin{tabular}{|l|r|r|} \hline
			$N_p$ & Slope \quad \qquad& PCC \quad \qquad  \\ \hline
		    $4$ & -136.446905 \qquad & -0.252837  \\ \hline
		    $5$ & -7.750791  \qquad& -0.032344  \\ \hline
		    $6$ & -15.859778  \qquad& -0.074562  \\ \hline
		    $7$ & -34.536717  \qquad& -0.143141  \\ \hline
		    $8$ & -14.999570  \qquad& -0.074147   \\ \hline
		    $9$ & -27.831100  \qquad&  -0.152313   \\ \hline
		    $10$ & -28.468661  \qquad& -0.162604  \\ \hline
		    $11$ & -29.284308  \qquad& -0.214940   \\ \hline
		    $12$ & -3.409933  \qquad& -0.046660   \\ \hline
			$13$ & -30.516286  \qquad& -0.461604   \\ \hline
			$14$ & -19.286370  \qquad& -0.317933   \\ \hline
		\end{tabular} }
\end{table} 
 \begin{figure}[ht]
	\centering
	\includegraphics[width=15.4cm]{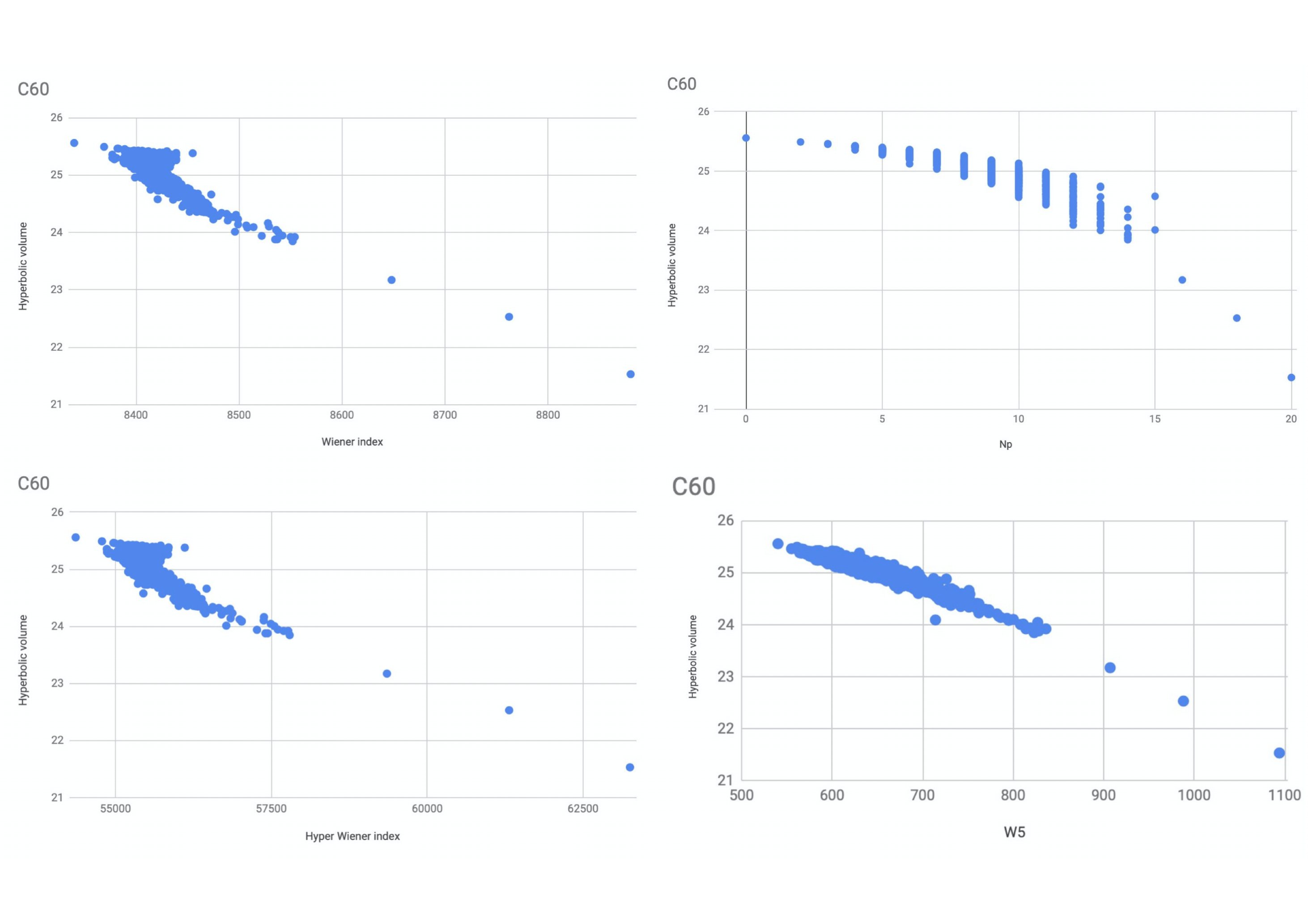} 
	\caption{Scatter charts of volume and topological indices.} 
	\label{fig.9} 
\end{figure}

Let us consider fullerenes with other numbers of atoms. Relative energies of 15 isomers of $C_{36}$ are given in ~\cite{Kim2005}. Hyperbolic volume indicates an isomer with the largest energy as the one with minimal volume $11.537549$. But an isomer with the lowest relative energy has the second largest volume $12.046092$. The maximal volume $12.084191$ has an isomer with the second smallest energy. Pearson correlation coefficient between volumes and energies is $-0.922790$.

Now let us demonstrate that hyperbolic volumes distinguish IPR fullerenes. It is shown in~\cite{Fowler2001} that in combination with the second moment of hexagon neighbor signature $H_6$, Wiener, Szeged, and Balaban indices can distinguish low-energy IPR fullerenes having  $84$ and $100$ carbon atoms. For the class of fullerenes with $100$ atoms minimization of $H_6$ gives $38$ different isomers with the same value of $H_6$, equals to $820$. Minimization of Wiener or Szeged indices, the same as maximization of Balaban index, would select the energetically unfavorable isomers. But among $38$ isomers with a minimum value of $H_6$ all three named indices select the isomer favored in quantum mechanical energy calculations,  that is an isomer D2 100:459. Let us now turn to hyperbolic volumes. Maximal hyperbolic volume of $C_{100}$ IPR fullerenes is given by the same isomer. The maximal volume is equal to $49.031808$. So this hyperbolic volume is attached to the best known best isomer among $450$ other $C_{100}$ IPR fullerenes. Similarly, in the class of fullerenes with $84$ atoms, the biggest hyperbolic volumes are attached to two isomers which are highlighted in ~\cite{Fowler2001} as the most stable ones. 

Among seven IPR fullerenes with $80$ atoms, only isomers $31918$ and $31919$ (numbers in the ordering of $C_{80}$ by lexicographically minimal spiral development) can be chemically separated, see ~\cite{Khamatgalimov2011}. The hyperbolic volume of these isomers are two smallest, Table~\ref{table5} presents  hyperbolic volumes, second moment of hexagon neighbor signatures $H_6$, nuclear volumes (we will discuss later) and relative energies of these seven isomers. It is intresting that the correlation reverses sign here. As one can see in Table~\ref{table5}, the same thing happens with $H_6$. In~\cite{Bille2019} autors notice the similar sitiation for Newton polynomials.
\begin{table}[!ht] \caption{Hyperbolic volumes, $H_6$, nuclear volumes and relative energies of seven IPR isomers of $C_{80}$.} \label{table5} 
	\centerline{ 
		\begin{tabular}{|c|c|c|c|c|} \hline
			Number & Hyperbolic volume & $H_6$ & Nuclear volume & Relative energy   \\ \hline
			$31918$ & 37.043523 & 500& 241.48452 & 0   \\ \hline
			$31919$ & 37.093128 & 496& 241.63592 & 0.41  \\ \hline
			$31921$ & 37.122213 & 492& 242.47294 & 4.37  \\ \hline
			$31920$ & 37.150100 & 488& 245.25394 & 2.58  \\ \hline
			$31922$ & 37.168408 & 484& 248.21569 & 1.48   \\ \hline
			$31923$ & 37.181925 & 480& 250.89256 & 3.32  \\ \hline
			$31924$ & 37.181925 & 480& 252.65704 & 14.31  \\ \hline
		\end{tabular} }
\end{table}

Correlations of hyperbolic volume with  topological indices $H_{5}$, $H_{6}$, $W_{5}$ and $N_{p}$ are presented in Fig.~\ref{fig.5}. The values of volumes and indices are compared for fullerenes  $C_N $ with  $48 \leq N \leq 62$ and $N=80$ atoms. It is clear from the figure that topological indices $ H_5 $ and $ W_5 $ demonstrate the best correlation with hyperbolic volume. For $W_5 $ the correlation is usually larger than $0.95$ by the absolute value. 
 \begin{figure}[ht]
	\centering
	\includegraphics[width=15.4cm]{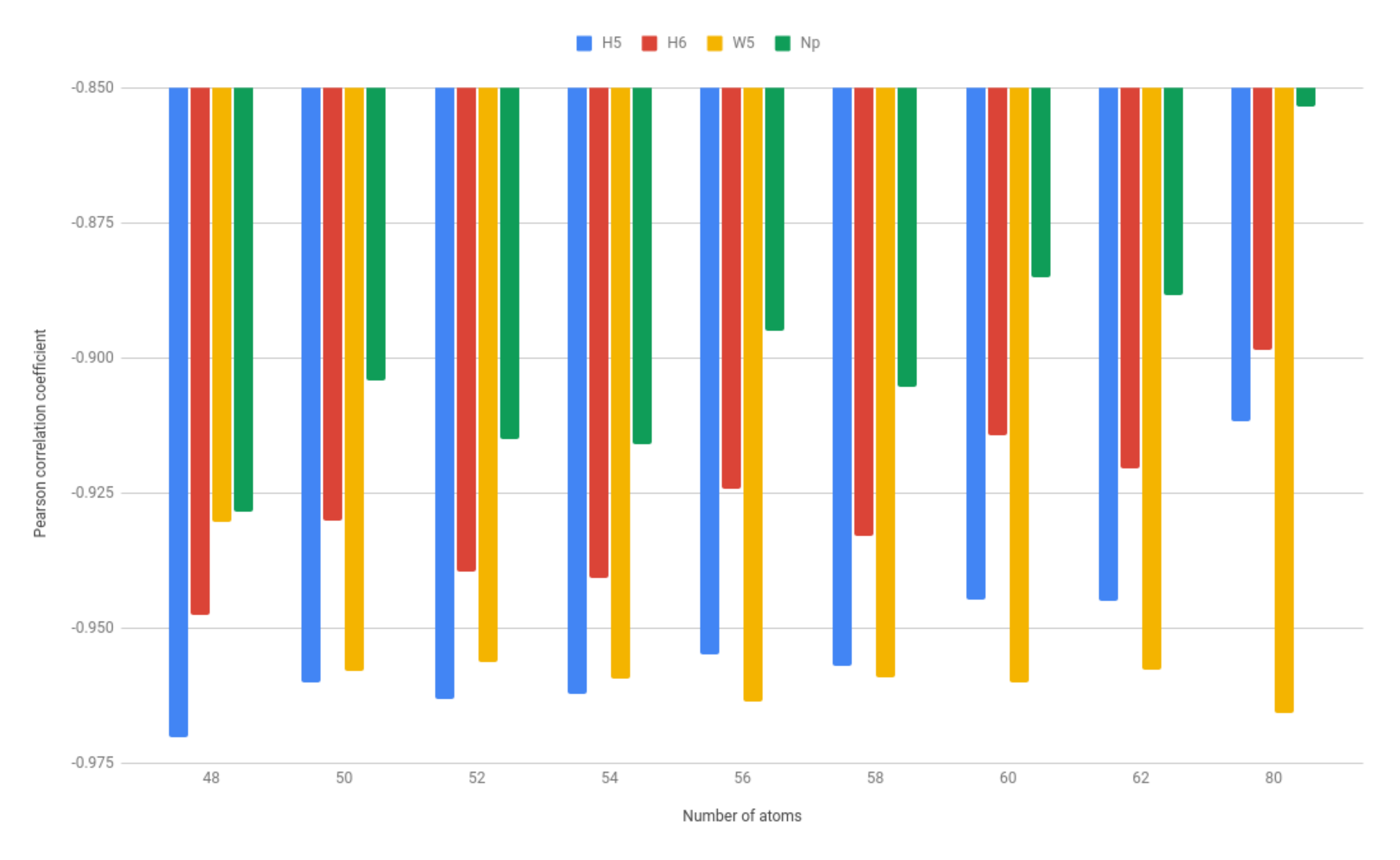} 
	\caption{Correlations between topological indices  and hyperbolic volume} 
	\label{fig.5} 
\end{figure}

Now let us turn to the case of IPR fullerenes. 
Obviously, in the case of IPR fullerene $H_5$ is always equal to zero and cannot give a good correlation with hyperbolic volume. Computations show  that $W_5$ also does not have a good correlation with hyperbolic volume. For example, Pearson correlation coefficient of $W_5$ and vo\-lu\-me of IPR fullerenes with $100$ atoms is $-0.422241$. At the same time, in the case of IPR fullerenes topological index $H_6$ shows a strong correlation with hyperbolic volume of fullerenes with $88 \leq N \leq 100$ atoms, see Table~\ref{table6}.

\begin{table}[!ht] \caption{Correlations between $H_6$ and hyperbolic volumes of IPR fullerenes} \label{table6} 
	\centerline{ \small 
		\begin{tabular}{|l|l|l|l|l|l|l|l|} \hline
			Number&88&90&92&94&96&98&100\\ 
			of atoms& &&&&&&   \\ \hline
			PCC &-0.933654&-0.929643&-0.972353&-0.946845&-0.946845&-0.939880&-0.931683 \\ 
			  & & & & & & & \\ \hline
		\end{tabular} 
	}
\end{table} 

It is interesting to use a multiple linear regression model to predict hyperbolic volume of $C_{60}$ fullerenes based on topological indices. We got $R^2$ (coefficient of determination) value equal to $0.992135$. The final predictive model was: 
\begin{displaymath}
\begin{gathered}
HV = 52.3061617-0.0102493573\cdot H_6 +  0.198026983 \cdot N_p -0.00298725582\cdot W\\ -0.00004197197\cdot W_5 -0.0178469333\cdot H_5
\end{gathered}
\end{displaymath}
The implementation of this model to all fullerene isomers with 80 atoms gave us PCC equal to $0.985944$.

Also, we build a multiple linear regression model that predicts relative energies of $C_{60}$ fullerenes based on all described topological indices and hyperbolic volume. Value of $R^2$ was $0.941725$. And the final predictive model was: 
\begin{displaymath}
\begin{gathered}
RE=-25159.684305+1.29550558\cdot H_6-2.54481396\cdot N_p+4.48823021\cdot W\\-0.26126652\cdot WW+0.411231354\cdot W_5-0.0311023629\cdot H_5+56.8461886\cdot HV
\end{gathered}
\end{displaymath}

Program Fullerene~\cite{Schwerdtfeger2013} generates cartesian coordinates for fullerene isomer. Then it can easily calculate the euclidean volume. This volume is also called the nuclear volume of an isomer~\cite{Adams1994}. It has been shown that this volume is an indicator of fullerene energies~\cite{Adams1994}. In~\cite{Sure2017} authors show that in the case of $C_{60}$ the correlation coefficient between this volume and relative energy is equal to $-0.908$. We find out that there is a strong connection between hyperbolic and nuclear volume, which can be seen in the following examples. In Table~\ref{table5} nuclear volumes of seven IPR $C_{80}$ isomers are presented. One can see that they monotonously depend on hyperbolic volume. As another example let us consider all 199 fullerene isomers with 48 atoms. From Table~\ref{table7} one can see that the correlation of hyperbolic volume and nuclear volume, in this case, is $0.84554$.

\begin{table}[!ht] \caption{Correlations of hyperbolic volume and nuclear volume with topological indices of $C_{48}$ fullerenes} \label{table7} 
	\centerline{ \small 
		\begin{tabular}{|l|l|l|l|l|l|l|} \hline
			 &$H_5$&$H_6$&$W_5$&$N_p$&Hyperbolic& Nuclear \\ 
			 &&&&&volume&volume\\ \hline
			Hyperbollic&-0.970026&-0.947454&-0.930290&-0.928317&1&0.845541 \\
			volume& &&&&&   \\ \hline
			Nuclear volume&-0.881119&-0.935015&-0.884770&-0.918476&0.845541&1 \\ \hline
		\end{tabular} 
	}
\end{table}

\section{Conjectures on volumes based on Wiener index} \label{sec6} 

In \cite{Dobrynin2019} there were calculated Wiener indices of all fullerene graphs up to $216$ vertices. The results of this calculation led authors to a conjecture that, if a fullerene graph of an arbitrary order has the maximal Wiener index, then it is a nanotubical fullerene graph with caps of four types (a), (b), (c) or (d). Four types of caps are shown in Fig.~\ref{fig.6}, see~\cite{Dobrynin2019}.  It turns that, in our calculation, fullerenes isomers, which graphs have the maximal Wiener index, have the minimal hyperbolic volume. 
\begin{figure}[ht]
	\centering
	\includegraphics[width=14.cm]{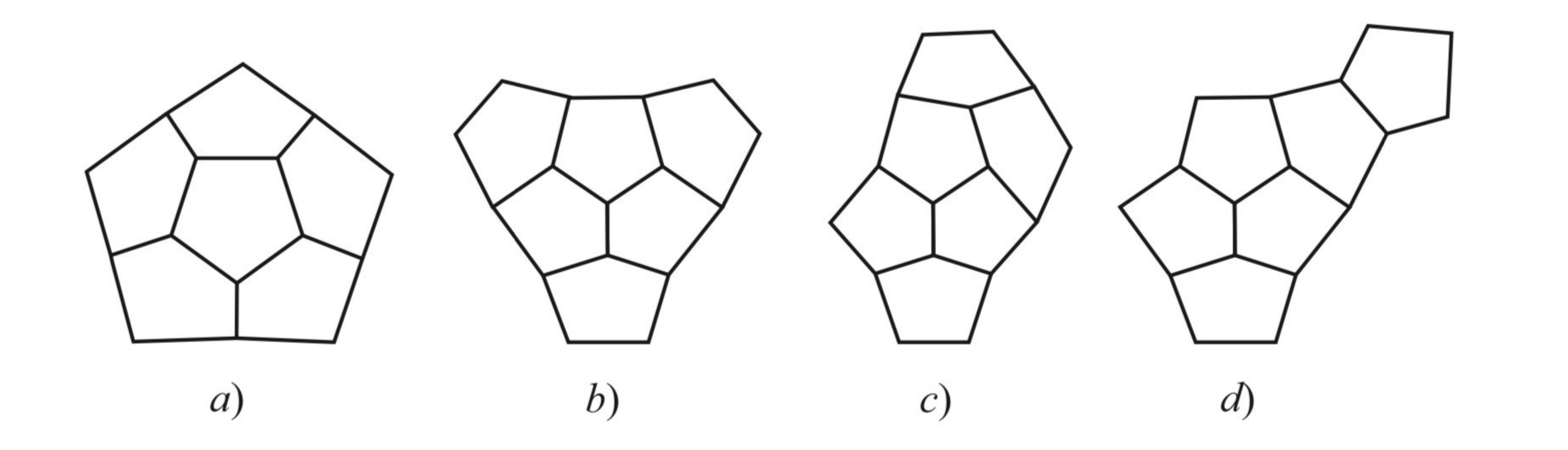} 
	\caption{Four types of caps.} 
	\label{fig.6} 
\end{figure}

It is known that a nanotubical fullerene graph with caps of type (a) exists only for a number of vertices $N = 10k$, $k \geq 2$, see ~\cite{Alizadeh2014}.

\begin{proposition} \label{prop.1}
	Let $F$ be  a nanotubical fullerene graph with caps of type (a). Then $F$ has $N = 10k$ vertices, $k \geq 2$, and 
	$$
	\operatorname{vol}(F) = \left (\frac{N}{10}-1 \right) \cdot \operatorname{vol} (D),
	$$
	where $D$ is a right-angled hyperbolic dodecahedron. Numerically $\operatorname{vol} (D)=4,30620$.
\end{proposition}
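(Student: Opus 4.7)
The plan is to build $F$ concretely in $\mathbb{H}^3$ as a face-to-face chain of $(k-1)$ copies of the right-angled hyperbolic dodecahedron $D$ and then invoke Andreev's rigidity. I would take $k-1$ copies $D_1, \ldots, D_{k-1}$ of $D$, and in each $D_i$ fix a pair of opposite pentagonal faces (a ``top'' $T_i$ and a ``bottom'' $B_i$) lying on a common 5-fold symmetry axis. Then I would glue $D_i$ to $D_{i+1}$ along the seam by identifying $B_i$ with $T_{i+1}$ via a suitable isometry of $\mathbb{H}^3$, obtaining a subset $F \subset \mathbb{H}^3$. Because the two pentagons adjacent to a seam (one from each side) meet the plane of the seam at right angles, the dihedral angle across a seam-edge between a face of $D_i$ and the corresponding face of $D_{i+1}$ is $\pi/2 + \pi/2 = \pi$; those two pentagons are thus coplanar in $\mathbb{H}^3$ and fuse into a single face of $F$, while the two endpoints of the vanished seam-edge become ``straight'' boundary points of that fused face with flat interior angle. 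Consequently $F$ is bounded by totally geodesic hyperbolic polygons meeting only at right angles, along the same lines as the gluing argument in the proof of Theorem~\ref{theorem34}.

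Next I would do the combinatorial bookkeeping. Each of the $k-2$ gluings removes $2$ pentagons (the identified face pair) and fuses $5$ pairs of flanking pentagons into $5$ new faces; each fused face has $5+5-2 = 8$ boundary segments, but two of its vertices are straight and so, counted combinatorially, the face is a hexagon. A direct vertex count then gives $N = 20(k-1) - 10(k-2) = 10k$, and the face count yields exactly $12$ pentagons and $5(k-2)$ hexagons, matching the fullerene identity $f = N/2 + 2$. I would then verify that the resulting 1-skeleton is precisely the nanotubical fullerene graph with caps of type (a): two ``half-dodecahedral'' caps of $6$ pentagons each, joined by $k-2$ equatorial rings of $5$ hexagons and sharing the 5-fold rotational symmetry inherited from the chain.

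Finally, I would invoke Andreev's uniqueness: the combinatorial type of a bounded right-angled hyperbolic polyhedron determines the polyhedron up to isometry. Hence the constructed $F$ is isometric to the unique right-angled hyperbolic realization of the given fullerene graph, and since the interior of $F$ is the disjoint union of the interiors of the $k-1$ congruent copies of $D$,
$$
\operatorname{vol}(F) = (k-1)\operatorname{vol}(D) = \left(\frac{N}{10} - 1\right)\operatorname{vol}(D).
$$
The hardest step is the combinatorial verification: checking that the chain-of-dodecahedra polyhedron really matches the definition of ``caps of type (a)'' used in~\cite{Alizadeh2014, Dobrynin2019}. This is essentially a picture-matching check, and I would carry it out by comparing the graph induced on one end of the chain with the cap diagram in Fig.~\ref{fig.6}. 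A secondary technical point is the straight-vertex analysis forcing the fused faces to be hexagons (and not octagons), which hinges on each single dodecahedron being right-angled so that the two flanking pentagons contribute exactly $\pi/2 + \pi/2 = \pi$ at each identified seam-endpoint.
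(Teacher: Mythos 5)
Your proposal is correct and follows essentially the same route as the paper: both decompose the nanotubical fullerene with caps of type (a) into a face-to-face chain of $N/10-1$ copies of the right-angled dodecahedron glued along opposite pentagonal faces, so that volumes add. You spell out the details the paper leaves implicit (the $\pi/2+\pi/2=\pi$ dihedral-angle fusion of flanking pentagons into hexagons, the vertex/face bookkeeping, and the appeal to Andreev rigidity to identify the glued polyhedron with the unique right-angled realization), but the underlying argument is the same.
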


\begin{proof}
Let us draw a lateral surface of $C_{20}$ as in Fig.~\ref{fig.7}, where left and right sides are assumed to be identified. 
 \begin{figure}[ht]
		\centering
		\includegraphics[width=6.5cm]{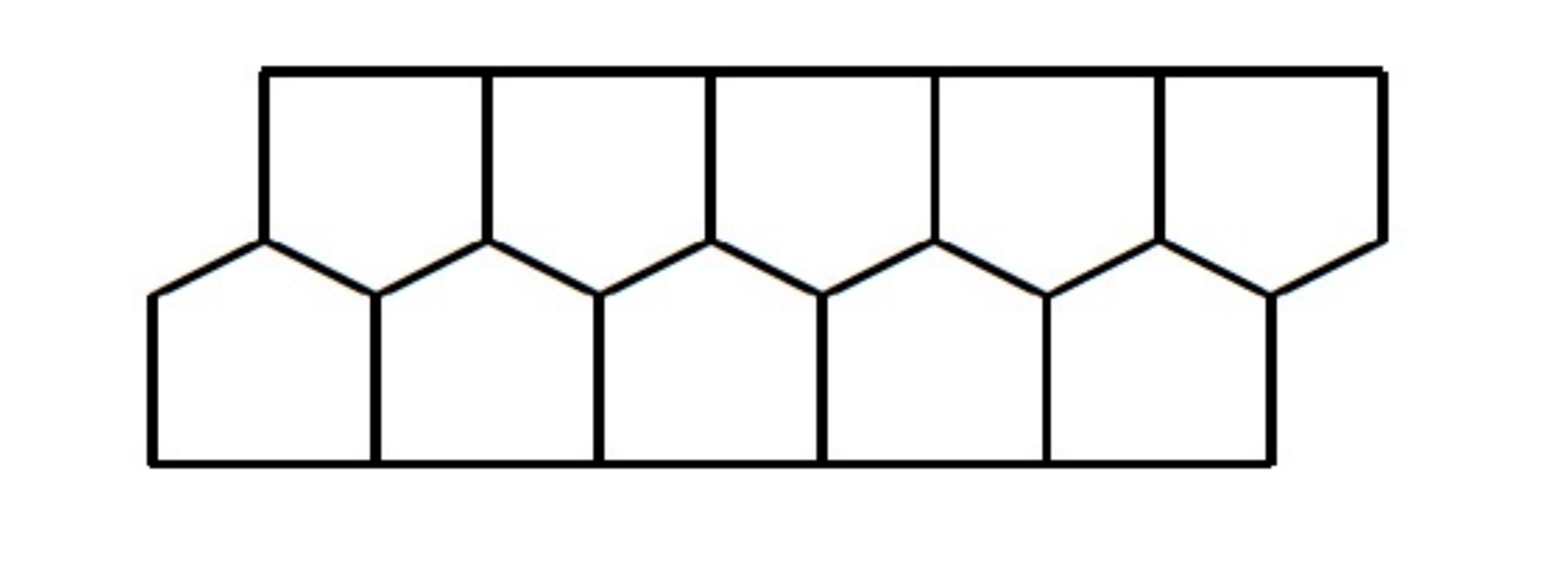} 
		\caption{The lateral surface of $C_{20}$.} 
		\label{fig.7} 
	\end{figure}
  
 Then a composition of $k$ copies of $C_{20}$ gives us a polyhedron with two caps of type (a), $N=10(k+1)$ vertices, and volume as in the formula above. Its lateral surface is presented in Fig.~\ref{fig.8}. 
 \begin{figure}[ht]
	\centering
	\includegraphics[width=8.5cm]{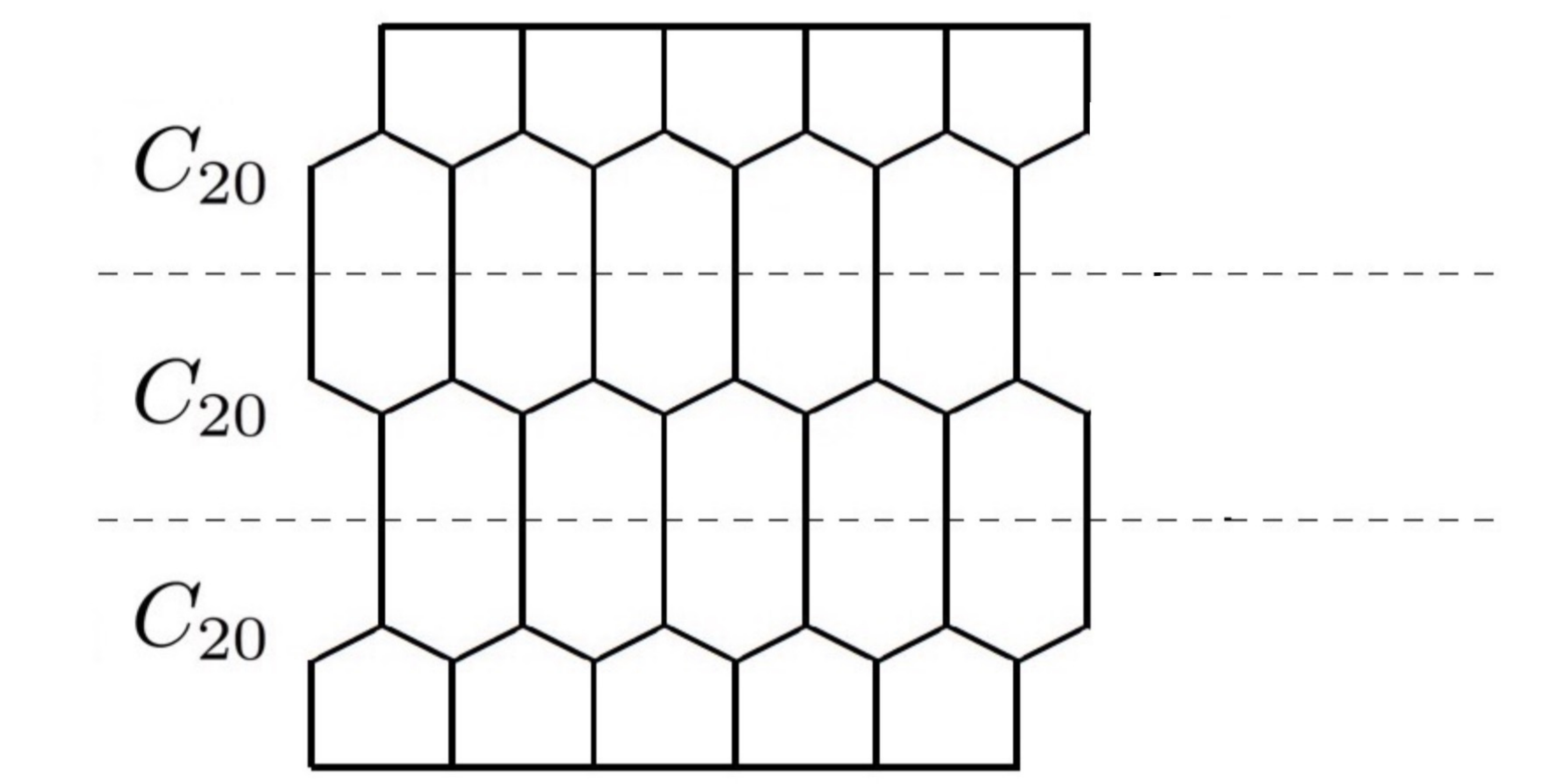} 
	\caption{The union of three copies of $C_{20}$.} 
	\label{fig.8} 
\end{figure}
 Denote this polyhedron by $C_{20}^k$. As one can see from Fig.~\ref{fig.8}, $C_{20}^3$ has two caps of type (a) and $40=10(3+1)$ vertices. Similarly,  $C_{20}^k$ for $k \geq 1$, has a lateral  surface consisting of $5(k-1)$ hexagons and two pentagonal cups of type (a).  Since $C_{20}$ can be realized as hyperbolic right-angled dodecahedron $D$, we get
	$$
	\operatorname{vol}(C_{20}^k) = k \cdot \operatorname{vol}(C_{20})= k \cdot \operatorname{vol}(D)= \left( \frac{N}{10}-1 \right) \cdot \operatorname{vol} (D). 
	$$
\end{proof}

\begin{conjecture} 
If fullerene  with $N = 10k$ atoms, $k \geq 2$, has the minimal hyperbolic volume in the class $C_{N}$, then it is nanotubical fullerene with caps of type (a) and its volume is given in Proposition~\ref{prop.1}. 
\end{conjecture}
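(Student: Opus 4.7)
The plan is to proceed by induction on $k$. The base case $k = 2$ is immediate, since $C_{20}$ is the unique fullerene on $20$ vertices and coincides with the polyhedron described in Proposition~\ref{prop.1}. The cases $k = 3, 4, 5, 6$ (i.e., $N = 30, 40, 50, 60$) are already verified to high precision by the numerical computations in Table~\ref{table1}, where for each such $N$ the minimal-volume isomer is marked with caps of type~(a) and the minimum coincides with $(k-1)\operatorname{vol}(D)$. The first step of the plan would be to push this computational verification as far as feasible, widening the base of the induction and providing structural data on near-minimal isomers.

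For the inductive step, the strategy would exploit the fact that $C_{20}^{k-1}$ decomposes geometrically into a stack of $k-1$ right-angled hyperbolic dodecahedra glued along interior pentagonal faces; these interior pentagons lie on totally geodesic hyperbolic planes, so the volume is additive and equal to $(k-1)\operatorname{vol}(D)$. The combinatorial signature of a type~(a) cap is a configuration of six pentagons (one central, five surrounding) separated from the hexagonal body by a $10$-cycle. I would aim to establish a dichotomy: either a minimum-volume $F$ admits a combinatorial type~(a) cap, in which case the bounding $10$-cycle can be shown to be the boundary of a totally geodesic disk in the hyperbolic realisation, so excising the cap gives a decomposition $\operatorname{vol}(F) = \operatorname{vol}(D) + \operatorname{vol}(F')$ where $F'$ is a right-angled hyperbolic polyhedron whose $1$-skeleton must be checked to be a fullerene graph on $10(k-1)$ vertices via the Pogorelov--Andreev conditions of Theorem~\ref{Andreev}, so that the inductive hypothesis yields $\operatorname{vol}(F) \geq k \operatorname{vol}(D)$ with equality iff $F = C_{20}^{k-1}$; or else $F$ has no type~(a) cap, in which case a direct lower bound must beat $(k-1)\operatorname{vol}(D)$.

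The main obstacle is the second alternative. The target $\operatorname{vol}(F) > (k-1)\operatorname{vol}(D) \approx 0.43\,N$ is far larger than what Atkinson's lower bound $(N-2)v_8/32 \approx 0.11\,N$ from Theorem~\ref{theoremAtkinson} delivers, so a genuinely new lower bound is required, one that is sensitive to the global distribution of the twelve pentagons rather than just the vertex count. The plan would be to adapt the face-doubling ideas from the proof of Theorem~\ref{theorem34}: if $F$ contains no type~(a) cap, its twelve pentagons must spread through $F$ in a pattern that, after a carefully chosen sequence of doublings across selected faces, produces an auxiliary right-angled polyhedron whose combinatorics allow Atkinson's upper bound to be inverted into a nontrivial lower bound on $\operatorname{vol}(F)$. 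Designing a doubling scheme that quantitatively converts the absence of a type~(a) cap into volume is, in my view, the deepest point of the argument, and is likely the reason the statement appears here as a conjecture: a complete proof would require a new combinatorial--geometric volume inequality for right-angled hyperbolic polyhedra that goes beyond the techniques currently available.
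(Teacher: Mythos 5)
The statement you were asked to prove is, in the paper, precisely a \emph{conjecture}: the authors offer no proof at all, only numerical confirmation for $N \leq 64$ (the caps column of Table~\ref{table1}) together with Proposition~\ref{prop.1}, which computes the volume of the candidate minimizer $C_{20}^{k-1}$ but says nothing about minimality. So there is no argument in the paper to compare yours against, and your own assessment at the end --- that a complete proof would need a new combinatorial--geometric volume inequality --- is the correct reading of why the statement is left open. Your proposal is an outline rather than a proof, and the decisive step (a lower bound of order $\approx 0.43\,N$ for fullerenes without a type~(a) cap, against Atkinson's $\approx 0.11\,N$) is exactly the part you leave unconstructed.

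Beyond that acknowledged gap, the first branch of your dichotomy also has a concrete structural problem. The additivity $\operatorname{vol}(C_{20}^{k}) = k\operatorname{vol}(D)$ in Proposition~\ref{prop.1} works because the gluing locus is a pentagonal \emph{face} of the right-angled dodecahedron, hence automatically totally geodesic. For a general minimal-volume $F$ with a combinatorial type~(a) cap, the $10$-cycle separating the cap from the body is a cycle of edges, not a face, and there is no reason it bounds a totally geodesic disk; moreover, even granting such a disk, the excised cap closed off by a $10$-gon has $15$ vertices, $20$ edges and $7$ faces, so its $1$-skeleton is not $3$-regular (the boundary vertices alternate between degree $2$ and $3$) and it cannot be a bounded right-angled polyhedron in the sense of Theorem~\ref{Andreev}. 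Hence the claimed decomposition $\operatorname{vol}(F) = \operatorname{vol}(D) + \operatorname{vol}(F')$ with $F'$ a fullerene on $10(k-1)$ vertices does not follow, and the induction cannot close even in the favourable case. What survives of your plan is the base-case verification and the correct identification of where the real difficulty lies; both branches of the inductive step would need genuinely new geometric input.
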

 
It is known that a nanotubical fullerene graph with caps of type (b) has $N = 6k-4$ vertices, $k \geq 5$, see~\cite{Dobrynin2019}. 

\begin{conjecture} 
If fullerene with $N = 6k-4 \neq 10s$ atoms, $k \geq 5$, $s \geq 2$, has the minimal hyperbolic volume in the class $C_{N}$, then it is a nanotubical fullerene with caps of type (b). If fullerenes with caps of types (a) and (b) have the same number of atoms ($N = 10k$), then the fullerene with caps of type (a) has the minimal hyperbolic volume.
\end{conjecture}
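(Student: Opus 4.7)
The plan is to handle this conjecture in two parts, paralleling the structure of Proposition~\ref{prop.1}. Let $F_b$ denote a nanotubical fullerene with caps of type (b). My first step is to construct $F_b$ explicitly from a ``base'' piece (containing both type-(b) caps) together with identical hexagonal ``belts'' glued along hexagonal faces; since each gluing face is 6-gonal and each vertex on the glued face sees three right-angled faces meeting, Theorem~\ref{Andreev} guarantees that the union remains a bounded right-angled polyhedron. Counting vertices in one belt gives $6$, matching $N = 6k - 4$, and an additive volume formula $\operatorname{vol}(F_b) = \alpha N + \beta$ follows, with explicit constants $\alpha, \beta$ obtained by a finite Lobachevsky-function calculation on the belt and cap pieces. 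This step is essentially mechanical, mirroring the $C_{20}^k$ construction.

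For the second assertion (that type (a) beats type (b) when $N = 10k$), it then suffices to compare per-vertex volume rates: by Proposition~\ref{prop.1} the type-(a) rate is $\operatorname{vol}(D)/10 \approx 0.430620$, while the above formula yields a type-(b) rate $\alpha$, and a single numerical computation should establish $\alpha > \operatorname{vol}(D)/10$. Combined with the additive formulas plus the finite cap contributions, this gives $\operatorname{vol}(F_a) < \operatorname{vol}(F_b)$ for every common admissible $N$, settling the second half of the conjecture.

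The genuinely hard part is the first assertion: showing that, for $N = 6k - 4$ with $N \ne 10s$, no non-nanotubical fullerene has smaller volume than $F_b$. The only general lower bound available is the linear Atkinson bound $(N-2)\,v_8/32$ from Theorem~\ref{theoremAtkinson}, which is far too weak, lying well below the tube rate $\alpha$. I would try to strengthen it via a \emph{geometric decomposition}: given a right-angled hyperbolic fullerene $P$, find a separating cycle of pentagonal and hexagonal faces and cut $P$ along it into pieces, each of which is again a bounded right-angled hyperbolic polyhedron by Andreev's theorem; then bound the volume of each piece from below by a shape-dependent refinement. The aim is to show that any deviation from the single-tube shape forces an extra cap or branching piece whose volume exceeds the per-vertex saving, so the tube minimizes volume within $C_N$.

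The main obstacle is precisely this last step: no sharp isoperimetric-style lower bound for right-angled hyperbolic polyhedra is currently known, and obtaining one seems to require genuinely new geometric input beyond the techniques used in Theorems~\ref{theorem34} and~\ref{theorem35}, which bound volume only from above. A pragmatic fallback is to confirm the conjecture by direct Buckygen-based enumeration up to whatever $N$ is computationally feasible, which matches the paper's stated approach of verifying the conjecture ``for the initial list of fullerenes'' and leaves the asymptotic statement as an open problem.
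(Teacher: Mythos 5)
This statement is a conjecture, and the paper offers no proof of it: the authors only report that it is \emph{confirmed numerically} for $N \leq 64$ (the cap types of the minimal-volume isomers are listed in the last column of Table~\ref{table1}). Your fallback --- direct verification by enumeration, leaving the general statement open --- is therefore exactly what the paper does, and your assessment that the global minimality claim would need genuinely new lower-bound techniques is accurate: Theorems~\ref{theorem34} and~\ref{theorem35} only bound volumes from above, and the Atkinson lower bound is linear with too small a slope.

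However, two steps in your constructive plan have real gaps. First, the additive volume formula $\operatorname{vol}(F_b) = \alpha N + \beta$ for type-(b) tubes is not ``essentially mechanical.'' The type-(a) formula in Proposition~\ref{prop.1} works because the $(5,0)$-nanotube is literally a union of isometric right-angled dodecahedra glued along pentagonal \emph{faces}: each glued pentagon becomes an internal totally geodesic wall, the dihedral angles $\pi/2$ on its boundary sum to $\pi$ so adjacent pentagons merge into hexagons, and Andreev rigidity then identifies the glued object with the unique right-angled realization of the nanotube. No analogous face-gluing decomposition is exhibited (or known) for type-(b) tubes; a ``belt'' of a nanotube is bounded by a cycle of \emph{edges}, not by a single face, and such a cycle need not lie on a totally geodesic plane. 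For the same reason your proposed geometric decomposition --- cutting $P$ along a separating cycle of faces --- does not produce bounded right-angled polyhedra, so Theorem~\ref{Andreev} cannot be invoked for the pieces. Second, even granting a linear formula, the comparison of per-vertex rates settles only the race between the two tube families, not the main assertion that \emph{no other isomer} of $C_N$ has smaller volume; that is the genuinely open content of the conjecture, and your proposal (rightly) does not close it.
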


For the cases of fullerenes with caps of types (c) and (d) there is no universal formula for the number of vertices. In~\cite{Dobrynin2019} fullerenes with caps of types (c) were splited into two disjoint classes:  $T_{c1}$ and $T_{c2}$. Similarly, fullerenes with caps of types $d$ were splited  into classes  $T_{d1}$ and $T_{d2}$. It is shown that for all $N \leq 216$ such that  $N \neq 20, 24, 28, 34$, $N \neq 10s$, and  $N \neq 6k-4$, where $k \geq 5$, $s \geq 2$, fullerenes with maximal Wiener index belong to $T_{c1}$, $T_{c2}$, $T_{d1}$ or $T_{d2}$.
 
\begin{conjecture} 
If a fullerene isomer with number of atoms $N \neq 20, 24, 26, 28, 34, N \neq 10s,  N \neq 6k-4$, where $k \geq 5$, $s \geq 2$, has the minimal hyperbolic volume in the class $C_{N}$, then it is is a nanotubical fullerene graph with caps of type (c) or (d). 
\end{conjecture}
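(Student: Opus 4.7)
The plan is to combine a sharpened lower bound on hyperbolic volume with the Wiener-index classification from~\cite{Dobrynin2019}. First I would try to improve Theorem~\ref{theorem35} to a lower bound whose equality case forces tube-like structure. The upper bounds in Theorem~\ref{theorem34} and Corollary~\ref{corollary32} exploit gluings along large faces; by dualising the argument I would look for an Atkinson-type lower bound where the slack measures how far the hexagonal subgraph is from forming a cylindrical band. Concretely, I would attach copies of $P$ along pairs of faces chosen so that the resulting infinite stack converges to a right-angled hyperbolic fibration over an infinite cylinder, whose volume-per-layer can be computed and serves as a sharp per-vertex floor.

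Next I would exploit the empirical correspondence between minimal hyperbolic volume and maximal Wiener index that motivates the conjecture. The goal is to prove, rather than merely observe, a monotonicity: for fixed $N$, if $C_N^{(1)}$ and $C_N^{(2)}$ are two isomers with $W(C_N^{(1)}) > W(C_N^{(2)})$ and both are nanotubical, then $\operatorname{vol}(C_N^{(1)}) < \operatorname{vol}(C_N^{(2)})$. Combined with the result of~\cite{Dobrynin2019} that the Wiener-maximal fullerene in the class $C_N$ is nanotubical with caps of type (a), (b), (c) or (d), this monotonicity would reduce the conjecture to showing that when $N \neq 10s$ and $N \neq 6k-4$ (so that caps of type (a) and (b) are ruled out), the volume minimum is actually attained, not merely approached, and is attained by the nanotubical fullerene with caps of type (c) or (d) rather than by some exceptional non-tubical competitor.

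The main obstacle will be the monotonicity step. The analytic formula from Proposition~\ref{prop.1} works only for caps of type (a) because the dodecahedral building block can be stacked rigidly; for caps of type (b), (c), (d) the finite tube is not a union of isometric copies of a smaller right-angled polyhedron, so one cannot argue by direct additivity. Instead I would try a variational/deformation argument: parametrise the nanotubical family by tube length, show that the volume is a convex or concave function of length with controlled slope equal to the per-hexagon-band contribution in the associated infinite Fuchsian limit, and then bound the volume of any non-nanotubical competitor from below by comparing to the volume of a nanotubical fullerene with the same Wiener-index-type invariant. Ruling out non-tubical exceptional minimisers rigorously is where I expect to get stuck, since no current tool forces the 1-skeleton of a volume-minimal right-angled polyhedron to be tube-like.

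For the finitely many small values of $N$ where any asymptotic argument would fail, I would rely on exhaustive enumeration via Buckygen: the table for $N \leq 64$ already verifies the conjecture in every admissible case, as shown by the ``caps type'' column of Table~\ref{table1}. Extending this enumeration to, say, $N \leq 216$ would match the range in which~\cite{Dobrynin2019} verified the parallel Wiener-index statement and would provide strong base-case evidence, leaving only the uniform asymptotic argument above to complete the proof.
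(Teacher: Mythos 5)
The statement you are addressing is labelled a \emph{conjecture} in the paper, and the authors do not prove it: their only support is the numerical verification for $N \leq 64$ recorded in the caps-type column of Table~\ref{table1}, together with the observed anticorrelation between hyperbolic volume and the Wiener index. Your proposal therefore cannot be compared against a proof in the paper, and, more importantly, it is not itself a proof --- it is a research programme whose two load-bearing steps are both open. First, the monotonicity you want (for fixed $N$, larger Wiener index implies smaller hyperbolic volume among nanotubical isomers) is exactly the empirical correlation that \emph{motivates} the conjecture; assuming it as a lemma is close to circular, and nothing in the paper's machinery (Theorems~\ref{theorem34} and~\ref{theorem35} give only one-sided coarse bounds of order $N \cdot 5v_3/8$, far too weak to separate isomers with the same $N$) gives a route to it. The additivity trick behind Proposition~\ref{prop.1} really does depend on the type-(a) tube being a stack of isometric right-angled dodecahedra, as you note, and no analogue of Andreev rigidity converts a volume extremality statement into a combinatorial structure statement. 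Second, your ``dualised Atkinson-type lower bound whose equality case forces tube-like structure'' does not currently exist: Atkinson's lower bound (and the refinement via independence numbers in Section~3) is obtained by packing disjoint tetrahedra at an independent set of vertices, and its slack has no known interpretation in terms of cylindrical bands of hexagons. You candidly admit that ruling out non-nanotubical minimisers is where you would get stuck; that exclusion is the entire content of the conjecture, so the proposal leaves the statement unproved.

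The one component of your plan that does align with what the paper actually does is exhaustive verification for small $N$: the authors check $N \leq 64$, and extending the computation toward $N \leq 216$ (the range of the Wiener-index classification in \cite{Dobrynin2019}) would indeed strengthen the evidence. But that is evidence, not proof, and you should present it as such rather than as a ``base case'' awaiting a uniform asymptotic argument that has not been supplied.
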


Numerical computations demonstrate that above conjectures are confirmed for $N \leq 64$. Indeed, the caps types corresponding to maximal hyperbolic volume isomers are indicated in the last column of Table~\ref{table1}. 
Remark that in exceptional cases $N = 20, 24, 26, 28, 34$ fullerene with minimal hyperbolic volume has one pentagonal component~\cite{Dobrynin2019}.

\end{document}